\documentclass[12pt,fleqn]{amsart}

\usepackage{macros}

\title{Hermitian rank in ideal powers}

\author[Abdullah Al Helal]{Abdullah Al Helal\,\orcidlink{0000-0002-4609-4511}}
\date{\today}
\address{Department of Mathematics, Oklahoma State University, Stillwater, OK 74078-5061}
\email{ahelal@okstate.edu}

\author[Ji{\v r}{\' i} Lebl]{Ji{\v r}{\' i} Lebl\,\orcidlink{0000-0002-9320-0823}}
\address{Department of Mathematics, Oklahoma State University, Stillwater, OK 74078-5061}
\email{lebl@okstate.edu}
\thanks{The second author was in part supported by Simons Foundation collaboration grant 710294.}

\subjclass[2020]{Primary 12D15; Secondary 14P05, 15A63, 32A99}
\keywords{hermitian rank, hermitian forms, real-analytic functions, Hilbert's 17th problem}

\begin{document}

\begin{abstract}
We prove that the (hermitian) rank of $QP^d$ is bounded from below by the rank of
$P^d$ whenever $Q$ is not identically zero and real-analytic in a neighborhood of some point on the zero set of $P$ in $\C^n$ and $P$ is a polynomial of bidegree at most $(1,1)$.
This result generalizes the theorem of D'Angelo and the
second author which assumed that $P$ was bihomogeneous.
Examples show that no hypothesis can be dropped.
\end{abstract}

\maketitle

\section{Introduction}
\label{sec:intro}

Artin's solution~\cite{artin-1927-uber} to 
Hilbert's 17th problem~\cite{hilbert-1900-mathematische}
says that every nonnegative polynomial on $\R^n$
can be written as a sum
of squares of rational functions, providing an
algebraic proof of its nonnegativity and thus finding
applications in both pure and applied mathematics.
Moreover, a sum of squares that is constantly 1 on a set $S$ 
induces a map of $S$ to the sphere
by considering the squared functions as components,
where the dimension is the number of squares.
Sums of squares thus find use in the,
still open, problem of
classifying pairs of dimensions for which a rational map of
spheres in $\R^n$ exists,
which itself is related to understanding the
homotopy groups of spheres,
see e.g.~\cite{bochnak-1998-real}*{chapter 13}.
Consequently, understanding
the number of squares used is vital.  
Pfister~\cite{pfister-1967-darstellung} proved
that at most $2^n$ squares are needed.

We are interested in the complex version of this circle of ideas.
A real polynomial $R$ on $\C^n \cong \R^{2n}$ can be written as
a polynomial in $z=(z_1,\ldots,z_n)$ and its conjugate $\bar{z}$.
Such an $R$ can be written as a difference of squared norms
\begin{equation}
\label{eq:diff-norms}
    R(z,\bar{z})=\norm{F(z)}^2-\norm{G(z)}^2 ,
\end{equation}
where $\norm{\cdot}$ is the standard euclidean norm and
$F \colon \C^n \to \C^A$ and $G \colon \C^n \to \C^B$ are
vector-valued holomorphic polynomial maps.
Most interesting is the positive case, $B=0$.
Quillen~\cite{quillen-1968-representation} and later 
Catlin--D'Angelo~\cite{catlin-1996-stabilization}
(see also~\cite{dangelo-2019-hermitian}*{Section~4.12})
proved an analogue of Artin's result:
If a bihomogeneous polynomial $Q$, that is,
$Q(s z, \overline{t z}) = s^d\bar{t}^d Q(z,\bar{z})$,
is positive on the sphere, then there exists a $k$
and an $F$ such that
\begin{equation}
Q(z,\bar{z})\norm{z}^{2k}=\norm{F(z)}^2 .
\end{equation}
See D'Angelo~\cite{dangelo-2005-complex}
and Varolin~\cite{varolin-2008-geometry}
for
complex analogues of Hilbert's 17th problem.
D'Angelo and the second author~\cite{dangelo-2012-pfisters}
(see also~\cite{dangelo-2021-rational}*{Section~2.3})
proved that an analogue of Pfister's result does not hold:
the dimension $A$ goes to
infinity as $k$ tends to infinity.

We will mostly forget about the positivity and focus on the number $A+B$.
We define the \emph{rank} (sometimes called the \emph{hermitian rank})
of a polynomial $R(z,\bar{z})$ as the smallest number $r$
such that
\begin{equation}
    R(z,\bar{z}) = \sum_{k=1}^r \phi_k(z) \overline{\psi_k(z)}
\end{equation}
for some holomorphic polynomials $\phi_j$ and $\psi_k$.
The rank is the smallest number $A+B$
that can be used in the expansion~\cref{eq:diff-norms} if $R$ is real-valued,
although the definition we gave allows a complex-valued $R$.  The definition
extends to real-analytic functions $R$ provided we allow $r=\infty$.
The reason for the use of the term rank is that $r$ is in fact the rank of
the matrix of coefficients of $R$.  See~\cref{sec:preliminaries} for more
detailed definitions.

Again, these ideas apply to understanding rational sphere maps;
in this case holomorphic rational maps taking the sphere in $\C^n$
to the sphere in $\C^N$.  In proving that all proper holomorphic maps of balls
that are $C^2$ up to the sphere for $n \leq N < 2n-1$
are equivalent to the linear embedding,
Huang~\cite{huang-1999-linearity} proved
the following useful lemma.  If $Q(z,\bar{z}) \not\equiv 0$ is a real-analytic
function defined near the origin on $\C^n$,
then
\begin{equation} \label{equation-huang-lemma}
\rank Q(z,\bar{z}) \norm{z}^2 \geq n = \rank \norm{z}^2 .
\end{equation}
It is not difficult to replace the form $\norm{z}^2$ with an indefinite
form $\norm{z}_\ell^2 = -\abs{z_1}^2 - \cdots -\abs{z_\ell}^2 + \abs{z_{\ell+1}}^2
+ \cdots + \abs{z_n}^2$ and obtain the same result.
Xiao~\cite{xiao-2023-theorem} has studied when equality holds in
\cref{equation-huang-lemma}
and proved that if $n \geq 3$, then equality holds if and only if
$Q$ is of rank 1.  When $n=2$, there is a trivial counterexample to that statement:
$(\abs{z_1}^2-\abs{z_2}^2)(\abs{z_1}^2+\abs{z_2}^2) = \abs{z_1}^4-\abs{z_2}^4$.
Gao~\cite{gao-2024-exhaustion} further generalized Xiao's
result.

Let $(j,k)$ be the \emph{bidegree} of a polynomial $P(z, \bar{z})$ 
if the total degree in $z$ is $j$ and total degree in $\bar{z}$ is $k$.
An arbitrary sphere or hyperquadric is given by a zero set of a real-valued
bidegree $(1,1)$ polynomial.
If $P(0)=0$, an expansion of $QP$ into a difference of squares, or more appropriately 
to an equivalent form $\Re H(z) + \norm{F(z)}^2-\norm{G(z)}^2$ where
$H(0)=F(0)=G(0)=0$, is then equivalent to
finding holomorphic maps locally taking a sphere or a hyperquadric to a sphere
or a hyperquadric.

In the aforementioned result of D'Angelo and the second author, it was proved that
for any $d$,
\begin{equation} \label{equation-pfister-counterexample}
\rank Q(z,\bar{z}) \norm{z}^{2d} \geq \rank \norm{z}^{2d} .
\end{equation}
Our aim is to generalize inequality~\cref{equation-pfister-counterexample} to replace
$\norm{z}^{2}$ with an arbitrary polynomial $P(z,\bar{z})$ of bidegree at most $(1,1)$.
That is, we wish to not only allow arbitrary quadratic terms,
but also allow linear terms and allow the polynomial to be complex-valued.
We now state our main result.

\begin{theorem}
\label[theorem]{thm:rank}
\pagebreak[2]
Let $n \geq 1$, $\mydeg \geq 0$,
$P(z,\bar{z})$ be a polynomial of bidegree at most $(1, 1)$ on $\C^n$ with a nonempty zero set,
and $Q(z,\bar{z}) \not\equiv 0$ be real-analytic and defined in a neighborhood of a point
$\mypt$ 
on the zero set of $P$.
Then
\begin{equation}
    \rank Q P^\mydeg \geq \rank P^\mydeg = \binom{\rank P + \mydeg - 1}{\mydeg}.
\end{equation}
\end{theorem}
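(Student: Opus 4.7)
After translating, assume $\mypt=0\in\C^n$ and $P(0,0)=0$. Fix a minimal rank decomposition $P=\sum_{k=1}^{r}\phi_k\overline{\psi_k}$: the bidegree-at-most-$(1,1)$ hypothesis forces $\phi_k,\psi_k$ to be affine polynomials, and minimality of $r=\rank P$ makes each of the families $\{\phi_k\}$ and $\{\psi_k\}$ $\C$-linearly independent. The multinomial theorem gives
\begin{equation*}
P^d = \sum_{|K|=d}\binom{d}{K}\,\Phi_K\,\overline{\Psi_K},\qquad \Phi_K = \prod_k \phi_k^{K_k},\quad \Psi_K = \prod_k \psi_k^{K_k},
\end{equation*}
a sum of $\binom{r+d-1}{d}$ terms. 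Because linearly independent affine polynomials generate a polynomial subalgebra of $\C[z_1,\dots,z_n]$, both families $\{\Phi_K\}_{|K|=d}$ and $\{\Psi_K\}_{|K|=d}$ are $\C$-linearly independent, so the displayed expansion is already minimal, giving $\rank P^d=\binom{r+d-1}{d}$.

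For the main inequality, I would use the polarization characterization of rank, $\rank R = \dim_{\C}\operatorname{span}\{R(\,\cdot\,,\overline{w}) : w\in U\}$, where the span is taken among germs of holomorphic functions of $z$ at the origin. Equivalently, setting $M = \binom{r+d-1}{d}$, the inequality $\rank QP^d\ge M$ holds if and only if the real-analytic determinant
\begin{equation*}
D(z_1,\dots,z_M;w_1,\dots,w_M) := \det\bigl(Q(z_i,\overline{w_j})\,P^d(z_i,\overline{w_j})\bigr)_{i,j=1}^{M}
\end{equation*}
is not identically zero near the origin of $(\C^n)^{2M}$. Since $\rank P^d=M$, the companion determinant $\det(P^d(z_i,\overline{w_j}))$ is not identically zero; the problem therefore reduces to showing that the additional factors $Q(z_i,\overline{w_j})$ do not conspire to force $D\equiv 0$.

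The main obstacle is that the multipliers $Q(z_i,\overline{w_j})$ couple rows and columns in a nonseparable way, so there is no direct factorization of $D$ into a $Q$-determinant and a $P^d$-determinant. My plan is to expand $Q$ in Taylor series at the origin and split $D$ according to how the Taylor coefficients of $Q$ interact with the $\Phi_K,\overline{\Psi_K}$-components of $P^d$. The bidegree-$\leq(1,1)$ structure of $P$ makes $P(z,\overline{w})$ bilinear, so $\det(P^d(z_i,\overline{w_j}))$ admits a rigid Vandermonde-like expansion in the monomials $\Phi_K(z_i)\overline{\Psi_K(w_j)}$ that isolates a privileged leading bigraded piece. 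Combined with the hypothesis that $Q\not\equiv 0$ on a neighborhood of $\mypt$ and that $\mypt$ lies on the zero set of $P$ (so $P$ admits an affine, vanishing-at-$\mypt$ expansion that keeps the bookkeeping clean), one should be able to identify at least one Taylor coefficient of $Q$ whose corresponding summand of $D$ survives without cancellation. Working out this combinatorial extraction---in particular tracking the interaction between the linear part of $P$, present whenever $P$ is not bihomogeneous, and the leading Taylor coefficients of $Q$---is the technical heart of the argument and where the full strength of the bidegree-$\leq(1,1)$ and zero-set hypotheses enters.
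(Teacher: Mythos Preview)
Your computation of $\rank P^d=\binom{r+d-1}{d}$ is correct and in fact slightly slicker than what the paper does: linearly independent affine functions do generate a polynomial subalgebra, so the multinomial expansion is already a minimal decomposition. The paper obtains this equality only as a by-product of the full-rank row-reduction argument below.

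For the inequality, however, you have not given a proof---you have described a program and explicitly deferred its ``technical heart.'' Two concrete gaps:

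\textbf{(i) The reduction to $Q(0)\neq 0$ is missing.} Your determinant strategy hopes to isolate a surviving Taylor coefficient of $Q$, but when $Q$ vanishes to high order at $\mypt$ (or worse, vanishes identically on the zero set of $P$), no single Taylor coefficient of $Q$ will give a clean leading term in $D$. The paper handles this separately: assuming $P$ irreducible (the reducible case being rank~$1$), if the polarized $Q$ vanishes on the polarized zero set of $P$ then $P$ divides $Q$, so $Q=Q'P^{d'}$ with $Q'$ not identically zero on that zero set, and one replaces $(Q,d)$ by $(Q',d+d')$. Otherwise one translates in the polarized coordinates to a nearby point where $Q\neq 0$. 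You never invoke the zero-set hypothesis in any comparable way.

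\textbf{(ii) The ``Vandermonde-like leading piece'' is not established.} Once $Q(0)\neq 0$, it is tempting to say the constant term of $Q$ contributes $Q(0)^M\det\bigl(P^d(z_i,\overline{w_j})\bigr)$ and everything else is higher order. But higher order in what grading? The entries $P^d(z_i,\overline{w_j})$ already contain terms of all bidegrees from $(0,d)$ through $(d,0)$ because $P$ has the linear part $w+\bar w$, so there is no total-degree filtration in which the $Q(0)^M$ term is isolated. This is exactly the difficulty the paper flags in the introduction: in the bihomogeneous case of D'Angelo--Lebl one can read off the rank from a single extremal superdiagonal, but the linear terms in $P$ destroy that monomial reduction.

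The paper's route is quite different from yours. After reducing to $Q(0)\neq 0$, it uses \emph{independent} linear changes in the holomorphic and antiholomorphic (polarized) variables to put $P$ in the normal form $P=w+\bar w+\lVert z\rVert^2+(\text{bidegree-}(1,1)\text{ terms involving }w\text{ or }\bar w)$ with $\rank P=n+1$. It then works directly with the finite coefficient matrix $\matcoeff{QP^d}_d$: an induction on $d$ shows that on an explicit index set $\mathcal{N}_d$ the coefficients vanish while on a complementary set $\mathcal{P}_d$ they equal $Q(0)$ times the corresponding (positive) coefficients of $P^d$. A second induction, now on an auxiliary parameter $t\in\{0,\dots,d\}$ indexing blocks $\mathcal{P}^t$ of $\mathcal{P}_d$, shows that these nonzero entries survive as pivots under row reduction, forcing $\matcoeff{QP^d}_d$ to have full rank $\binom{n+d}{d}$. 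Your determinant $D$ encodes the same rank, but you have not supplied the bookkeeping that replaces this pivot analysis.
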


\pagebreak[2]
We note that the above is the most general statement of this result possible
in the sense that no hypothesis can be dropped, and the bound is sharp.
Firstly, the bound is sharp as equality occurs if $Q$ is a nonzero constant,
and it trivially fails if $Q\equiv 0$.

Less trivially, the conclusion fails if $P$ has no zero set:
For $\mydeg > 0$, $P = 1 + \norm{z}^2$, and $Q = \frac{1}{P^\mydeg}$,
we get that $P$ has no zero set, and
rank of $P^\mydeg$ is $\binom{n + \mydeg}{\mydeg} > 1$, but rank of $Q P^\mydeg$ is $1$.
More generally, the conclusion fails simply if $Q$ is not defined in a neighborhood of
any point on the zero set of $P$:
For $\mydeg > 0$, $P = 1 - \norm{z}^2$, and $Q = \frac{1}{P^\mydeg}$,
the rank of $P^\mydeg$ is $\binom{n + \mydeg}{\mydeg} > 1$, but rank of $Q P^\mydeg$
is $1$.

The conclusion may also fail if the bidegree of $P$ 
is bigger than $(1, 1)$:
For $\mydeg = 1$, $n = 2$, $P = \abs{z_1}^4 - \sqrt{2} \abs{z_1}^2 \abs{z_2}^2 + \abs{z_2}^4$, and 
$Q = \abs{z_1}^4 + \sqrt{2} \abs{z_1}^2 \abs{z_2}^2 + \abs{z_2}^4$,
we get that $P$ is of bidegree $(2, 2)$ and
rank of $P^\mydeg$ is $3$, but rank of $Q P^\mydeg = \abs{z_1}^8 + \abs{z_2}^8$ is $2$.
The proof of~\cite{dangelo-2011-hermitian-symmetric}*{Proposition~4.1} generalizes this
to a family of examples with $\mydeg$ a power of $2$.

The key idea in the proof is to reduce to the case when $P$ is of the form
\begin{equation}
    w + \bar{w} + \norm{z}^2 
    + \text{bidegree-$(1,1)$ terms involving $w$ or $\bar{w}$}
\end{equation}
where we split the variables to $z \in \C^{n-1}$ and $w \in \C$.
The combinatorics of the bound on the rank
in the case considered in \cite{dangelo-2012-pfisters} turns out to
be somewhat straightforward once the problem is viewed
in the correct context;
one bounds the rank of the matrix by considering the number of nonzero
entries on an extremal superdiagonal (or subdiagonal), and the count
reduces to what could be termed a ``monomial version'' of the problem.
In the presence of the linear terms $w + \bar{w}$, we can no longer
reduce to a single superdiagonal (a monomial version), and the
combinatorics required for the degree bound are significantly more difficult.
If $Q$ were a polynomial, then one could work in projective space
and get rid of the linear terms by an automorphism of ${\mathbb{P}}^n$.
However, if $Q$ is a real-analytic function, then such a change
of coordinates is unavailable.
The idea of the proof is that
both the matrix of coefficients of $P^\mydeg$
and a certain submatrix of the matrix of coefficients of $Q P^\mydeg$ 
in the reduced case have enough zero entries to allow row reduction
preserving certain nonzero entries.
These nonzero entries raise diagonal submatrices of full rank
in the row echelon form.

See \cite{dangelo-2011-hermitian-symmetric} for more discussion of the
possible pairs $(A,B)$ that can arise in a product and
examples of the possible collapse of rank, $A+B$, of a product.
In particular, the rank of a product can be 2, even if each of the
factors has arbitrarily large rank.
Powers of a real polynomial $p(x)$ in one variable may have fewer terms
than $p(x)$,
see e.g., Coppersmith--Davenport~\cite{coppersmith-1991-polynomials}.
Consider $P(z,\bar{z}) = p(\abs{z}^2)$, then
the hermitian rank of $P$ is precisely the number of terms
in $p$, and
we obtain examples where the hermitian rank of $P^d$ is lower than
the hermitian rank of $P$.

Before we move on, it may be good to 
contrast the real setting with the hermitian one.
A key difference
is how one finds the sum (or difference) of squares.
Writing a real polynomial or a real-analytic function
in $\C^n$ as a hermitian sum of squares
corresponds to diagonalization of the uniquely defined
corresponding hermitian matrix of coefficients
(see \cref{sec:preliminaries}).
For a real polynomial in $\R^n$,
one also diagonalizes a matrix, but
the symmetric matrix of coefficients
is not unique and hence finding a sum of squares 
representing a polynomial
is a convex optimization problem.  Notice for example that
\begin{equation}
    \begin{bmatrix} x^2 & xy & y^2 \end{bmatrix}
    \begin{bmatrix} 0 & 0 & c \\ 0 & -2c & 0 \\ c & 0 & 0 \end{bmatrix}
    \begin{bmatrix} x^2 \\ xy \\ y^2 \end{bmatrix}
    \equiv 0 \quad \text{for any } c .
\end{equation}
Thus, there is much less rigidity in writing the
sum of squares.
One example consequence of this difference is
the aforementioned theorem of Pfister,
and the lack of its analogue in the complex setting.
This distinction is
also visible on the fact that the dimensions of interest in the
real sphere mapping problem $F \colon S^n \to S^N$ are those
where $N$ is less than $n$, while in the complex case,
holomorphic maps taking the sphere $S^{2n-1} \subset \C^n$ to
$S^{2N-1} \subset \C^N$
are easily seen to be
constant if $N < n$, and the complex problem is interesting precisely
when $N > n$.


\section{Preliminaries}
\label{sec:preliminaries}

This section establishes the fundamental definitions and notations
for the (hermitian) rank of real-analytic functions.
These concepts form the basis of our result.
In this section, we write
$z = (z_1, \dots, z_n)$ and $\zeta = (\zeta_1, \dots, \zeta_n)$ 
for the coordinates in $\C^n$,
where $\zeta$ is used for polarization.

\begin{notation}
For any positive integer $k$,
denote by $[k]$ the set $\set{1, \dots, k}$ of integers from $1$ to $k$.
For convenience, we will let $[0]$ denote the empty set,
and $[\infty]$ denote the natural numbers $\N$.
We will use this notation extensively to index terms in sums.
\end{notation}

\begin{definition}
\label[definition]{def:rank}
Let $U \subset \C^n$ be a domain, and $R \colon U \to \C$ a
real-analytic function.
We define the
\emph{rank} of $R$ at
$p \in U$ to be 
the smallest $r \in \set{0} \cup \N \cup \set{\infty}$ such that
there exists a neighborhood $W$ of $p$ in $\C^n$ and 
holomorphic maps $\phi, \psi \colon W \to \C^r$ such that
\begin{equation}
\label{eq:rank}
    R(z, \bar{z}) = \sum_{k \in [r]} \phi_k(z) \overline{\psi_k(z)} .
\end{equation}
\end{definition}

It is easy to see that the rank is invariant under biholomorphic
changes of coordinates fixing $p$
by simply plugging the changes of coordinates into
the $\phi$ and the $\psi$.  It is slightly more complicated to show that
the rank does not depend on the point $p$.  Before we do so, we consider
the matrix of coefficients, which is an infinite matrix
that encodes the coefficients of the Taylor series of $R$
and shares the same rank as $R$.
One has to be careful about the domain of convergence of $R$ and 
the infinite matrix arising from $R$.
For more information on the ideas behind this approach in the real-valued setup,
see~\cites{dangelo-1993-several,grundmeier-2014-bounding,lebl-2020-hermitian}.
Let us develop the rank in the slightly more general complex-valued setting.
Without loss of generality, we may assume that $\mypt=0$ for simplicity.

\begin{remark}
If $R$ is real-valued, we can choose $\psi_k = \pm \phi_k$ for all $k \in [r]$,
and after a reordering, we get an $s$ with $0 \leq s \leq r$,
$\psi_k = \phi_k$ for all $1 \leq k \leq s$, and
$\psi_k = -\phi_k$ for all $s < k \leq r$.
Let $F = (\phi_k)_{1 \leq k \leq s} \colon W \subset \C^n \to \C^s$ and 
$G = (\phi_k)_{s < k \leq r} \colon W \subset \C^n \to \C^{r - s}$ to get
\begin{equation}
    R(z, \bar{z}) 
    = \sum_{1 \leq k \leq s} \abs{\phi_k(z)}^2 - \sum_{s < k \leq r}
    \abs{\phi_k(z)}^2
    = \norm{F(z)}^2 - \norm{G(z)}^2,
\end{equation}
which expresses $R$ as a difference of squared norms as in \cref{eq:diff-norms}.
\end{remark}

A key technique in our analysis is polarization of $R$, 
which refers to its extension to a function of $(z, \zeta)$
and allows us to treat holomorphic and anti-holomorphic parts separately.

\begin{notation}
A \emph{multi-index} $\ea = (\ea_1, \dots, \ea_n)$ 
is a vector of nonnegative integers in $(\set{0} \cup \N)^n$.
In multi-index notation, for $z = (z_1, \dots, z_n) \in \C^n$,
\begin{equation}
    z^\ea = z_1^{\ea_1} \dots z_n^{\ea_n}
\end{equation}
and
\begin{equation}
    \abs{\ea} = \sum_{k \in [n]} \ea_k = \ea_1 + \dots + \ea_n.
\end{equation}
\end{notation}

\begin{definition}
\label[definition]{def:matrix-coeff}
Fix an ordering on the monomials in $z$.
Let $\mathcal{Z} = (\dots, z^\ea, \dots) \colon \C^n \to \C^\infty$ 
be the map whose components are all monomials.
Suppose $R$ is a real-analytic function defined in a neighborhood
of the origin in $\C^n$.  Write
\begin{equation}    
    R(z, \bar{z}) = \sum R_{\ea\ec} z^\ea \bar{z}^\ec,
\end{equation}
where $\ea$ and $\ec$ are multi-indices.
We define the \emph{matrix of coefficients} of $R$ as the infinite matrix
\begin{equation}    
    \matcoeff{R} = [R_{\ea\ec}]_{\ea\ec}.
\end{equation}
We will also call it the matrix of coefficients of the polarization 
$R(z, \zeta) = \sum R_{\ea\ec} z^\ea \zeta^\ec$.
\end{definition}

\textit{A priori}, the matrix of coefficients is a formal object.  However,
after a normalization, we will associate it to an operator on $\ell^2$.
Let us rescale and assume
that the polarized Taylor series for $R(z,\zeta)$ converges on 
$\overline{\Delta \times \Delta} \subset \C^n \times \C^n$,
where $\Delta$ is the unit polydisc.
It is proved in~\cite{grundmeier-2014-bounding}*{Lemma~7}
that under this condition, $\matcoeff{R}$ defines a
compact trace-class operator on $\ell^2$.
Such operators are in one-to-one correspondence with the induced matrices,
and so we will use $\matcoeff{R}$ for both the matrix and the
operator.
The key idea behind our bound for the rank of $\matcoeff{R}$ will then
be bounding the rank of some submatrix of the matrix representation of
$\matcoeff{R}$.

Using the vector $\mathcal{Z}$,
$R(z, \bar{z}) =  \inner{\matcoeff{R} \mathcal{Z} , \mathcal{Z}}$,
where $\inner{\cdot, \cdot}$ is the standard inner product.
Let $r \in \set{0} \cup \N \cup \set{\infty}$ denote the rank of 
the operator $\matcoeff{R}$.
The spectral theorem for trace-class operators and the singular
value decomposition 
(see, e.g., \cite{sunder-2015-operators} for trace-class operators and 
\cite{reed-1972-methods} for compact operators)
tell us that there are nonnegative numbers $\sigma_k \to 0$ for $k \in [r]$
called the singular values of $\matcoeff{R}$ and
corresponding orthonormal sets in $\ell^2$ with $r$ elements
$\set{u_1, u_2, \dots, u_r}$ and $\set{v_1, v_2, \dots, v_r}$,
called the left and right singular vectors respectively, such that
for any $x \in \ell^2$,
\begin{equation}
    \matcoeff{R} x =
    \sum_{k \in [r]} \sigma_k \inner{x,u_k} v_k .
\end{equation}
Let $\phi_k = \sqrt{\sigma_k} \inner{\mathcal{Z},u_k}$ and
$\psi_k = \sqrt{\sigma_k} \inner{\mathcal{Z},v_k}$ for $k \in [r]$.
As the vectors are square summable, we find that $\phi_k$
and $\psi_k$ are holomorphic functions of $\Delta$.
Then 
\begin{equation}
    R(z, \bar{z} )
    = \inner{\matcoeff{R} \mathcal{Z}, \mathcal{Z}}
    =
    \sum_{k \in [r]} \sigma_k \inner{\mathcal{Z},u_k} \inner{v_k,\mathcal{Z}}
    = \sum_{k \in [r]} \phi_k(z) \overline{\psi_k(z)} .
\end{equation}
This tells us that the decomposition in \cref{eq:rank} is always possible; 
hence, rank is a well-defined function.
Notice that the components of $\phi$ and $\psi$ are linearly independent.
If $R(z, \bar{z}) = \sum_{k \in [r']} \phi'_k(z) \overline{\psi'_k(z)}$
for some $r' \in \set{0} \cup \N \cup \set{\infty}$ and 
holomorphic maps $\phi', \psi' \colon W \subset \Delta \to \C^{r'}$, 
then reversing the process and possibly rescaling so that $W$ contains the
closed unit polydisc gives $\matcoeff{R}$ as a sum of $r'$
rank-$1$ operators, so $r' \geq r$
and $r$ is the smallest such number required in the decomposition
in \cref{eq:rank}.
Therefore, $r$ is the rank of $R$ at $p$, and 
is the smallest such number in the decomposition in \cref{eq:rank}
if and only if the components of $\phi$ and $\psi$ are linearly independent.

\begin{proposition}
\label[proposition]{prop:rankindofpoint}
If $R \colon U \to \C$ is real-analytic and $U \subset \C^n$ is open and connected,
then the rank of $R$ is independent of the point $p \in U$.
\end{proposition}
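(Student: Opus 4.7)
The plan is to show that the function $r \colon U \to \set{0} \cup \N \cup \set{\infty}$ defined by $r(p)=$ rank of $R$ at $p$ is locally constant; since $U$ is connected, the proposition follows at once.

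Fix $p_0 \in U$ and set $r_0 = r(p_0)$. Using \cref{def:rank} together with the discussion preceding the proposition (which, via the singular value decomposition of $\matcoeff{R}$, produces a minimal decomposition whose components are linearly independent), I obtain a connected neighborhood $W_0 \subset U$ of $p_0$ and holomorphic maps $\phi,\psi \colon W_0 \to \C^{r_0}$ satisfying
\[
    R(z,\bar z) = \sum_{k \in [r_0]} \phi_k(z)\,\overline{\psi_k(z)}
    \qquad \text{on } W_0,
\]
with the components of $\phi$ (and of $\psi$) linearly independent as holomorphic functions on $W_0$.

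The heart of the argument is to show that $r(q) = r_0$ for every $q \in W_0$. The bound $r(q) \leq r_0$ is immediate, since the displayed decomposition restricts to any connected neighborhood $W_q \subset W_0$ of $q$. For the reverse inequality I will invoke the criterion recorded just before the proposition: a decomposition realizes the rank \emph{if and only if} its components are linearly independent. Given a relation $\sum_k c_k \phi_k \equiv 0$ on $W_q$, the identity theorem applied on the connected set $W_0$ will force $\sum_k c_k \phi_k \equiv 0$ on all of $W_0$, whence linear independence on $W_0$ gives $c_k = 0$ for every $k$; the analogous argument will handle the $\psi_k$'s. Thus $\phi_k|_{W_q}$ and $\psi_k|_{W_q}$ are still linearly independent, the restricted decomposition is minimal at $q$, and $r(q) = r_0$.

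This will establish local constancy of $r$ on $U$, and connectedness finishes the proof. The argument is almost entirely formal; the one analytic ingredient is the identity theorem, used precisely to transport linear independence from $W_0$ to a smaller connected open subset. There is no serious obstacle here—in particular, the infinite-dimensional operator-theoretic content of $\matcoeff{R}$ has already been absorbed into the fact that the minimal decomposition has linearly independent components, so nothing further needs to be done with the matrix of coefficients at this stage.
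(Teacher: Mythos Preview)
Your argument is correct, but it takes a different route from the paper's.  The paper proves the proposition by showing that the rank is both upper- and lower-semicontinuous: upper-semicontinuity is immediate (a decomposition near $p$ restricts to nearby points), while lower-semicontinuity is obtained by noting that the Taylor coefficients of $R$---and hence every $(r+1)\times(r+1)$ minor of $\matcoeff{R}$---vary continuously with the base point, so the vanishing of all such minors is a closed condition.  You instead transport the \emph{minimal} decomposition itself: having fixed $\phi,\psi$ with linearly independent components on $W_0$, you use the identity theorem to see that linear independence persists on any connected open $W_q\subset W_0$, and then invoke the criterion (stated just before the proposition) that a decomposition realizes the rank precisely when its components are linearly independent.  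Your approach is a bit more function-theoretic and never touches the matrix of coefficients inside the proof; the paper's approach is more matrix-theoretic and has the minor conceptual bonus of isolating the two semicontinuity statements separately.  Both arguments are short and either would serve.
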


\begin{proof}
Since a decomposition \cref{eq:rank} also gives a decomposition at nearby
points, we have that rank is upper-semicontinuous.  Next, note that
the coefficients of the matrix $\matcoeff{R}$ vary continuously
as the point $p$ moves.  If $\matcoeff{R}$ is of finite rank less
than or equal to $r$
on a sequence of points $q_k$ converging to $p$,
then all $(r+1)\times(r+1)$ subdeterminants of the matrix must be zero.
By continuity, they are also zero at $p$ and hence the rank is at most $r$
at $p$.  Hence the rank is lower-semicontinuous as well.  In other words,
the rank is constant.
\end{proof}

In fact, not only is the rank independent of biholomorphic coordinate
changes, we can, after polarization, change coordinates separately
in $z$ and $\zeta$.

\begin{proposition}
\label[proposition]{prop:polarbiholranksame}
Suppose that $U \subset \C^n$ is a domain,
$U^* = \{ z : \bar{z} \in U \}$ its conjugate,
and $R \colon U \times U^* \to \C$ a holomorphic function.
For two functions $a \colon V \to U$ and $b \colon V \to
U$ for a domain $V \subset \C^n$ that are biholomorphic onto their image,
\begin{equation}
    \rank R(z,\bar{z}) = \rank R\bigl( a(z), \overline{b(z)} \bigr) .
\end{equation}
\end{proposition}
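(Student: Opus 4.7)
The plan is to work entirely in the polarized picture, where the rank of a real-analytic function is the length of the shortest separated expansion of its (holomorphic) polarization; in this picture a biholomorphic substitution in $z$ and a biholomorphic substitution in $\zeta$ can be performed independently of one another. The essential observation is that composing a separated expansion $\sum_k \phi_k(z)\tilde{\psi}_k(\zeta)$ with holomorphic maps in each slot yields another separated expansion of the same length, and biholomorphicity lets us reverse the process.

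For the upper bound $\rank R(a(z), \overline{b(z)}) \leq \rank R(z,\bar z)$, fix a point in $V$, let $r = \rank R(z,\bar z)$, and invoke \cref{def:rank} to obtain holomorphic maps $\phi, \psi$ with $R(z,\bar z) = \sum_{k \in [r]} \phi_k(z) \overline{\psi_k(z)}$ on a polydisc $W$. Setting $\tilde{\psi}_k(\zeta) := \overline{\psi_k(\bar{\zeta})}$ polarizes this identity to $R(z,\zeta) = \sum_{k \in [r]} \phi_k(z) \tilde{\psi}_k(\zeta)$, which then extends to $U \times U^*$ by analytic continuation since $R$ is holomorphic there. Substituting $z \mapsto a(z)$ in the first slot and $\zeta \mapsto \overline{b(\bar{\zeta})}$ in the second (both of which are holomorphic) and restricting to $\zeta = \bar z$ produces $R(a(z), \overline{b(z)}) = \sum_{k \in [r]} \phi_k(a(z)) \overline{\psi_k(b(z))}$, an expansion of length $r$ for the substituted function.

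The reverse inequality is where the biholomorphic hypothesis is used essentially: $a$ and $b$ have holomorphic inverses on $a(V)$ and $b(V)$, and the holomorphic substitution $\zeta \mapsto \overline{b(\bar{\zeta})}$ has holomorphic inverse $\eta \mapsto \overline{b^{-1}(\bar{\eta})}$. Repeating the argument of the previous paragraph, starting from a minimal separated expansion of $R(a(z), \overline{b(z)})$ and substituting $z \mapsto a^{-1}(w)$ together with the corresponding inverse in the $\zeta$ slot, recovers $R(w, \bar w)$ as a sum of the same length and gives the reverse inequality.

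No serious obstacle is anticipated; the argument is symmetric and short. The two items requiring care are (i) correctly tracking the complex conjugations when passing between $\psi_k$ and its polarization, and between $b$ and $\overline{b(\bar{\cdot})}$, and (ii) arranging that all of the Taylor expansions converge on compatible polydiscs after substitution. The first is bookkeeping, and the second is handled by shrinking to small polydiscs about the base point, which is legitimate because \cref{prop:rankindofpoint} guarantees that both ranks are independent of the point at which they are computed.
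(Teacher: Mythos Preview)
Your overall scheme---substitute $a$ and $b$ into a minimal separated expansion, then reverse with $a^{-1}$ and $b^{-1}$---matches the paper's. The substantive issue, which both proofs must confront, is that a minimal expansion $R(z,\zeta)=\sum_k\phi_k(z)\tilde\psi_k(\zeta)$ is initially defined only near a single diagonal point $(p,\bar p)$, whereas you need to evaluate it at $(a(v_0),\overline{b(v_0)})$, which in general lies off the diagonal. The paper handles this by first proving (via a continuity argument along a path $c(t)$, mimicking the proof of \cref{prop:rankindofpoint}) that $\rank R(z,\overline{z+c})$ is constant in $c$, thereby reducing to the case $a(0)=b(0)$; after that reduction a single small polydisc contains both images and the substitution is immediate, with the reverse inequality following by genuine symmetry.

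Your alternative is to assert that the expansion ``extends to $U\times U^*$ by analytic continuation since $R$ is holomorphic there.'' This claim is true, but the justification you give is not: holomorphicity of $R$ on the large domain does not by itself extend the individual $\phi_k,\tilde\psi_k$, and ordinary analytic continuation could run into monodromy if $U$ is not simply connected. The missing step is a linear-algebra observation: for a minimal expansion the $\tilde\psi_k$ are linearly independent, so one can pick points $\zeta_1,\dots,\zeta_r$ in the small polydisc with $M=[\tilde\psi_k(\zeta_j)]$ invertible, and then $\phi_k(z)=\sum_j (M^{-1})_{kj}R(z,\zeta_j)$ manifestly extends $\phi_k$ to all of $U$; symmetrically for $\tilde\psi_k$. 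The same gap recurs in your reverse direction: after substituting the inverses you obtain an expansion of $R$ only on $a(V)\times(b(V))^*$, which may miss the diagonal entirely if $a(V)\cap b(V)=\emptyset$, and your appeal to \cref{prop:rankindofpoint} does not close this since that proposition moves the basepoint only along the diagonal. Either the paper's path reduction or the linear-algebra extension trick is needed here as well.
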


\begin{proof}
Consider two points $p,q \in U$.  By connectedness,
there exists a continuous $c \colon [0,1] \to \C^n$
so that $p+c(0)=p$, $p+c(1)=q$ and a small polydisc centered at $p+c(t)$ is
a subset of $U$ for all $t \in [0,1]$.
That
$\rank R(z,\overline{z+c(t)})$
is independent of $t$ follows the same exact logic as the proof
of \cref{prop:rankindofpoint}.

Without loss of generality assume $0 \in V$.
By the argument above, we can assume that $a(0)=b(0)=p \in U$.
There is a connected neighborhood $W$ of $p$
on which we have a
decomposition \cref{eq:rank}.  We may assume that $a$ and $b$
map into $W$ (possibly making $V$ smaller).
Then we simply plug $a$ and $b$ into \cref{eq:rank} and get 
$\rank R(z,\bar{z}) \geq \rank R\bigl( a(z), \overline{b(z)} \bigr)$,
and the opposite inequality follows by symmetry again.
\end{proof}

The proposition means that once we polarize, we can even consider a point
that is not on the diagonal (the set where $\zeta = \bar{z}$) in order to
compute the rank.
Moreover, we find that rank can be defined for the polarized 
$R$ and is independent of the point in $U \times U^*$.

Note that $\matcoeff{R}$ depends on the ordering of the monomials, 
but its rank does not.
As the rank of $R$ coincides with the rank of $\matcoeff{R}$,
we will use rank of $R$ and rank of $\matcoeff{R}$ interchangeably.
This equivalence simplifies computation and relates (hermitian) rank
to linear algebra.

\begin{definition}
By \emph{bidegree} of a polynomial $P(z, \bar{z})$, we mean a pair $(j, k)$
where $j$ is the total degree in $z$, and 
$k$ is the total degree in $\bar{z}$.
We will also call it the bidegree of the polarization $P(z, \zeta)$.
\end{definition}

If $R$ is a polynomial of bidegree $(d,d)$, then we only need to use
monomials up to degree $d$ in $\mathcal{Z}$ in the decomposition.
Hence we will also assume in this case that
$\matcoeff{R}$ is a finite matrix.

We will make repeated use of the following simple lemma.

\begin{lemma}
\label[lemma]{lem:reduce-rank}
Let $R$ be a real-analytic function defined in a neighborhood 
of the origin in $\C^n$,
$R'$ the real-analytic function obtained from 
plugging in values in some of the variables 
in the Taylor series of $R$, 
and $R''$ the real-analytic function obtained by 
removing some monomials from the Taylor series of $R$.
Then $\rank R \geq \rank R'$ and $\rank R \geq \rank R''$.
\end{lemma}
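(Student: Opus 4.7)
The plan is to reduce both inequalities to routine manipulations of the rank decomposition from \cref{def:rank} and, equivalently, of the infinite matrix $\matcoeff{R}$ from \cref{def:matrix-coeff}.

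For the first inequality, $\rank R \geq \rank R'$, I would start from a rank decomposition $R(z,\bar{z})=\sum_{k\in[r]}\phi_k(z)\overline{\psi_k(z)}$ with $r=\rank R$ valid in some neighborhood of the origin. By \cref{prop:polarbiholranksame}, after polarization the rank can be computed at any point of $U\times U^*$, so substituting the specified values into some of the $z_j$ (with the conjugate values going into the corresponding $\bar{z}_j$) simply amounts to plugging those constants into each $\phi_k$ and $\psi_k$. The result is an at-most-$r$-term decomposition of $R'$ in the remaining variables; some summands may collapse or become linearly dependent, but this can only lower the rank, so $\rank R'\leq r$.

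For the second inequality, $\rank R \geq \rank R''$, I would pass to the matrix of coefficients. Interpreting the removal of monomials as deleting all $z^\alpha\bar{z}^\beta$ whose multi-indices $(\alpha,\beta)$ lie outside some product set $A\times B$, the matrix $\matcoeff{R''}$ becomes the $(A,B)$-submatrix of $\matcoeff R$ (equivalently, $\matcoeff R$ with its complementary rows and columns zeroed). Since any submatrix of the trace-class operator $\matcoeff R$ has rank at most the rank of the original, $\rank R''\leq\rank R$. Equivalently and more constructively, one can replace each $\phi_k$ (resp.\ $\psi_k$) in the rank decomposition by its Taylor truncation to monomials indexed by $A$ (resp.\ $B$), whose product summed over $k$ recovers $R''$, giving an at-most-$r$-term decomposition directly.

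The main point requiring care will be the precise sense in which monomials are ``removed''. The argument above applies exactly when the removed set of multi-indices corresponds to deleting whole rows and/or columns of $\matcoeff R$, which is the setup that arises in the proof of \cref{thm:rank}; an unstructured removal of an isolated Taylor coefficient can genuinely raise the rank (for instance, $(1+z_1)(1+\bar{z}_1)$ has rank $1$, but deleting the single $z_1\bar{z}_1$ term yields $1+z_1+\bar{z}_1$ of rank $2$), so the implicit reading is that deletions occur along such ``slabs'' of multi-indices.
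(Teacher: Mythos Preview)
Your argument is correct and follows essentially the same route as the paper: substitute into the holomorphic decomposition for $R'$, and pass to a submatrix of $\matcoeff{R}$ for $R''$. Your final paragraph is a genuinely useful addition---the paper's proof tacitly reads ``removing monomials'' as deleting full rows and columns of $\matcoeff{R}$, and your counterexample $(1+z_1)(1+\bar z_1)$ makes explicit that this structured reading is necessary for the conclusion to hold.
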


\begin{proof}
Since plugging in values potentially removes or collapses terms from the 
holomorphic decomposition of $R$, the rank potentially reduces, but
does not increase.
Similarly, since removing monomials from the Taylor series of $R$
removes rows and columns from the 
matrix $\matcoeff{R}$, the rank potentially reduces, but does not increase.
\end{proof}

We will get a lot of mileage out of the following very useful observation.

\begin{proposition}
\label[proposition]{prop:rank-Pd}
Let $n \geq 1$, $\mydeg \geq 0$, and $P(z,\bar{z})$ a polynomial on $\C^n$.
Then
\begin{equation}
    \rank P^\mydeg \leq \binom{\rank P + \mydeg - 1}{\mydeg}.
\end{equation}
\end{proposition}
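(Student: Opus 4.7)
The plan is to expand $P^\mydeg$ directly from an optimal holomorphic decomposition of $P$ via the multinomial theorem and count the resulting terms. Let $r = \rank P$. If $r = \infty$, the bound is vacuous, and if $r = 0$ then $P \equiv 0$, so $P^\mydeg \equiv 0$ for $\mydeg \geq 1$ and both sides are zero (the edge case $\mydeg = 0$ is handled separately by noting $P^0 = 1$ has rank $1$, matching $\binom{r-1}{0} = 1$ for $r \geq 1$ under the usual convention). So assume $1 \leq r < \infty$ and $\mydeg \geq 1$.

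By \cref{def:rank}, there exist a neighborhood $W$ of a point and holomorphic maps $\phi, \psi \colon W \to \C^r$ such that
\begin{equation}
    P(z, \bar{z}) = \sum_{k \in [r]} \phi_k(z) \overline{\psi_k(z)}.
\end{equation}
Raising to the $\mydeg$-th power and applying the multinomial theorem gives
\begin{equation}
    P(z,\bar{z})^\mydeg
    = \sum_{\substack{\ea \in (\set{0}\cup\N)^r \\ \abs{\ea} = \mydeg}}
    \binom{\mydeg}{\ea_1, \dots, \ea_r}
    \prod_{k \in [r]} \phi_k(z)^{\ea_k} \,\overline{\psi_k(z)^{\ea_k}}.
\end{equation}
Grouping the holomorphic and antiholomorphic factors, each summand has the form $\Phi_\ea(z)\,\overline{\Psi_\ea(z)}$ with $\Phi_\ea$ and $\Psi_\ea$ holomorphic on $W$. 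This is a decomposition of the form \cref{eq:rank}, and hence
\begin{equation}
    \rank P^\mydeg \leq \#\set{\ea \in (\set{0}\cup\N)^r : \abs{\ea} = \mydeg} = \binom{r + \mydeg - 1}{\mydeg},
\end{equation}
where the count of weak compositions of $\mydeg$ into $r$ parts is the standard stars-and-bars identity.

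There is essentially no obstacle here: the only subtlety is checking that the decomposition obtained is truly a valid witness for the rank in the sense of \cref{def:rank}, which amounts to verifying that each $\Phi_\ea$ and $\Psi_\ea$ is holomorphic on the common neighborhood $W$, and that the sum is finite (so no convergence issues arise). Both are automatic once $r$ is finite. Note that the components $\Phi_\ea$ and $\Psi_\ea$ need not be linearly independent, which is precisely why the estimate is an inequality rather than an equality; this slack is what allows for possible strict drops in rank under taking powers and leaves room for the sharpness of \cref{thm:rank}, where the reverse inequality in the presence of a cofactor $Q$ is the substantive content.
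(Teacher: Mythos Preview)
Your proof is correct and follows essentially the same approach as the paper: take an optimal decomposition of $P$, apply the multinomial theorem, and count the resulting terms as $\binom{\rank P + \mydeg - 1}{\mydeg}$. Your additional treatment of edge cases and the closing commentary are extra but harmless.
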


\begin{proof}
If $P(z, \bar{z}) = \sum_{k \in [\rank P]} \phi_k(z) \overline{\psi_k(z)}$
for holomorphic maps $\phi$ and $\psi$,
then the multinomial theorem decomposes $P^\mydeg$ as in \cref{eq:rank}
with $\binom{\rank P + \mydeg - 1}{\mydeg}$ terms.
In other words, $\rank P^\mydeg \leq \binom{\rank P + \mydeg - 1}{\mydeg}$.
\end{proof}


\section{Normalization}
\label{sec:normalization}

Let us make some reductions and prove some special cases
of our main result \cref{thm:rank}.
First, and easiest, reduction is to assume after a translation
that $\mypt=0$ and therefore that $P(0)=0$.

We will normalize the polynomial $P$ and the real-analytic function $Q$.
In particular,
we will assume that $Q$ polarizes to some connected
neighborhood $U \times U^*$ of the origin in the polarized space.
From now on,
write $n' = n - 1 \geq 0$, and
$(z, w) = (z_1, \dots, z_{n'}, w) = (z_1, \dots, z_{n-1}, w)$ and
$(\zeta, \eta) = (\zeta_1, \dots, \zeta_{n'}, \eta) = (\zeta_1, \dots, \zeta_{n-1}, \eta)$ 
for the coordinates in $\C^n$,
where $(\zeta, \eta)$ is used for polarization.

By \cref{prop:polarbiholranksame}, the rank is independent of biholomorphic
changes of coordinates independently in $(z,w)$ and in $(\zeta,\eta)$,
as long as we do not leave $U \times U^*$, that is, as long
as the changes of coordinates take $(0,0)$ (in the polarized space) to
a near enough point.

We will assume that the monomials are in graded reverse lex order.
Since $P(0)=0$,
then $\matcoeff{P}$, where only known terms are shown, is
\begin{equation}
    \matcoeff{P}
    =
    \begin{bmatrix}
        0 & & \\
          &
    \end{bmatrix}.
\end{equation}
That is, for $P$ it is sufficient to
consider the monomials $(1,z_1,\ldots,z_{n-1},w)$ for corresponding to
columns from left to right
and $(1,\zeta_1,\ldots,\zeta_{n-1},\eta)$ for the rows from top to bottom.

\begin{lemma}
\label[lemma]{lem:rank1or0}
\Cref{thm:rank} holds if rank of $P$ is less than or equal to $1$,
or if $P$ is reducible.
\end{lemma}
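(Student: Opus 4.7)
My plan is to observe that in both cases the rank of $P^\mydeg$ is at most $1$, so \cref{prop:rank-Pd} realizes the binomial bound tightly, and the whole lemma collapses to the statement that a nonzero product $QP^\mydeg$ of real-analytic functions on a connected neighborhood of the origin has rank at least $1$. This last fact is immediate once one notes that real-analytic functions on a connected domain form an integral domain; one sees this directly from the polarization setup used to define the matrix of coefficients in \cref{sec:preliminaries}, since polarizations are holomorphic on a connected open set in $\C^n \times \C^n$. The rank-$0$ case is automatic: $P \equiv 0$ makes both sides of the inequality zero when $\mydeg \geq 1$, and for $\mydeg = 0$ the claim becomes $\rank Q \geq 1$, which holds because $Q \not\equiv 0$.

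For $\rank P = 1$, I would use \cref{def:rank} directly to write $P(z,\bar z) = \phi(z)\overline{\psi(z)}$ with $\phi, \psi$ holomorphic polynomials of degree at most $1$ (degree constrained because $P$ has bidegree at most $(1,1)$). Raising to the $\mydeg$-th power gives the single hermitian product $P^\mydeg = \phi^\mydeg \cdot \overline{\psi^\mydeg}$, so $\rank P^\mydeg \leq 1$, and combining with \cref{prop:rank-Pd} yields exactly $\rank P^\mydeg = 1$ whenever $P \not\equiv 0$. The integral-domain remark then gives $QP^\mydeg \not\equiv 0$, hence $\rank(QP^\mydeg) \geq 1 = \rank P^\mydeg$. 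For the reducible case, I would reduce to the rank-$1$ case by a short degree count. If $P = P_1 P_2$ with both $P_i$ non-units, each factor has total degree exactly $1$ in $(z,\bar z)$, so I may write $P_i = A_i(z) + B_i(\bar z) + c_i$ with $A_i, B_i$ linear homogeneous. The purely $z$-quadratic contribution to $P_1 P_2$ is $A_1 A_2$, which must vanish because $P$ has bidegree at most $(1,1)$; since $\C[z]$ is an integral domain this forces $A_1 = 0$ or $A_2 = 0$. The symmetric argument for $\bar z^2$ forces one of $B_1, B_2$ to vanish, and if both $A_i$ and $B_i$ were zero for the same $i$ then that factor would be the unit $c_i$, a contradiction. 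After relabeling, $P$ has the form $\phi(z)\overline{\psi(z)}$ up to conjugation of coefficients, so $\rank P \leq 1$ and the rank-$1$ case finishes the argument.

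The only potential obstacle is purely notational — reading \cref{prop:rank-Pd} literally at the boundary (in particular the convention $\binom{-1}{0} = 1$, pertinent when $\mydeg = 0$ and $\rank P = 0$), and keeping track of the degenerate configurations (nonzero-constant $P$ is excluded from \cref{thm:rank} by the nonempty zero-set hypothesis). There is no genuine combinatorial obstacle here: the lemma is precisely a sanity check clearing the degenerate cases so that the main proof can work under the residual assumptions $\rank P \geq 2$ and $P$ irreducible.
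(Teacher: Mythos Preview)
Your argument is correct and follows the same overall strategy as the paper: dispose of $\rank P \leq 1$ directly (both you and the paper observe that $P = \phi\,\overline{\psi}$ gives $P^\mydeg$ of rank $1$, and $QP^\mydeg \not\equiv 0$ forces $\rank QP^\mydeg \geq 1$), then reduce the reducible case to $\rank P \leq 1$.

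The reducible-case arguments differ in technique. The paper works in the polarized variables $(z,w,\zeta,\eta)$, uses independent linear changes in $(z,w)$ and $(\zeta,\eta)$ to normalize one affine factor to $w$ or $w+\eta$, and then reads off the bidegree constraint case by case to see that the other factor is either purely antiholomorphic or constant. Your argument stays in the original variables and uses the integral-domain property of $\C[z]$ directly: the vanishing of the pure $z$-quadratic part $A_1A_2$ forces one $A_i$ to vanish, and symmetrically for the antiholomorphic parts; ruling out the case where the same factor loses both parts then splits $P$ as holomorphic times antiholomorphic. Both routes reach $\rank P \leq 1$; yours is slightly more self-contained since it does not invoke the polarized coordinate-change machinery of \cref{prop:polarbiholranksame}, while the paper's route simply reuses infrastructure that is needed later in the normalization section anyway.
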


\begin{proof}
If $\rank P = 0$, then $P \equiv 0$ and the conclusion
of \cref{thm:rank} holds trivially.
If $\rank P = 1$, then
$P = f \bar{g}$ for holomorphic (affine) $f$ and $g$,
so that $P^\mydeg = f^\mydeg \overline{g^\mydeg}$ is of rank $1$.
As $Q P^\mydeg \not \equiv 0$, $\rank Q P^\mydeg \geq 1 = \rank P^\mydeg$.

Assume $\mypt=0$.  If $P$ is reducible, then we can write $P=P_1 P_2$ where 
$P_1$ and $P_2$ are affine linear and $P_1$ vanishes at the origin.
We are allowed to make a linear change of coordinates in $(z,w)$ and
$(\zeta,\eta)$ independently,
and so we can assume that $P_1 = w$ or $P_1 = w+\eta$.
If $P_1 = w$, then $P_2$ cannot have any terms $z$ or $w$ as then $P$
would have a bidegree $(2,0)$ term.
So $P_1$ is a function of $(z,w)$ and $P_2$ is a function of
$(\zeta,\eta)$ and hence $P$ is of rank $1$.
If $P_1 = w+\eta$, then $P_2$ can have no linear terms, otherwise
we would have a bidegree $(2,0)$ or $(0,2)$ term.
So $P_2$ is a constant and so $P$ is not reducible.
In other words, if $P$ is reducible, then it is of rank $1$.
\end{proof}

\begin{lemma}
\label[lemma]{lem:normalQnotzero}
    To prove \cref{thm:rank}, it is sufficient to prove the
conclusion of the theorem when $Q(\mypt,\bar{\mypt})\neq 0$.
\end{lemma}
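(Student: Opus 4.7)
The plan is to exploit the fact that both $\rank(QP^{\mydeg})$ and $\rank(P^{\mydeg})$ are intrinsic invariants, independent of the choice of base point within the domain of definition by \cref{prop:rankindofpoint} and \cref{prop:polarbiholranksame}. It therefore suffices to produce a point $\mypt'$ on the zero set of $P$, near $\mypt$, at which $Q(\mypt', \bar{\mypt'}) \neq 0$; the assumed special case applied at $\mypt'$ then yields the desired inequality for the original data.

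I would handle this in two cases. If $Q$ is not identically zero on $\{P(z,\bar z) = 0\}$ in a neighborhood of $\mypt$, then the zero set of $Q$ restricted to $\{P = 0\}$ is a proper real-analytic subset, so a suitable $\mypt'$ can be chosen arbitrarily close to $\mypt$. Otherwise $Q$ vanishes identically on $\{P(z,\bar z) = 0\}$ near $\mypt$, and I would first reduce via \cref{lem:rank1or0} to $P$ irreducible in $\C[z,\zeta]$, then polarize and check by a direct tangent-space/$J$-invariance computation that the real zero set sits inside the complex hypersurface $\{P(z,\zeta) = 0\}$ as a totally real submanifold of maximal dimension (at least when $P$ is real-valued). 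The identity theorem for holomorphic functions then forces the polarization of $Q$ to vanish on the full polarized hypersurface near $(\mypt, \bar{\mypt})$, and the analytic Nullstellensatz (together with irreducibility) yields $Q = P \tilde{Q}$ with $\tilde{Q}$ real-analytic. Iterating to maximality — which terminates because $Q$ has finite order of vanishing at $\mypt$ — I obtain $Q = P^k Q_1$ where $Q_1$ no longer vanishes identically on $\{P = 0\}$, so the first case applies to $Q_1$. Applying the special case at a point $\mypt'$ to $(Q_1, P, \mydeg + k)$ gives
\[
\rank(QP^{\mydeg}) = \rank(Q_1 P^{\mydeg+k}) \geq \rank(P^{\mydeg+k}) \geq \rank(P^{\mydeg}),
\]
where the last inequality comes from the special case applied to the constant $Q \equiv 1$ (combined with \cref{prop:rank-Pd}), which gives $\rank(P^j) = \binom{\rank P + j - 1}{j}$, a quantity that is monotone nondecreasing in $j$.

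The step I expect to be the main obstacle is the totally-real-slice argument in the complex-valued case: when $P$ is not real-valued, $\{P(z,\bar z) = 0\}$ has real codimension two and its diagonal image is not of maximal totally real dimension inside the polarized hypersurface, so holomorphic vanishing on the slice does not force vanishing on the full hypersurface and $Q$ need not in fact be divisible by $P$ (as already witnessed by $Q = \bar{P}$). I would deal with this either by invoking an additional normalization that reduces the theorem to real-valued $P$ before applying this lemma, or by working with the decomposition $Q = \alpha P + \beta \bar{P}$ supplied by the real-analytic ideal of $\{P = 0\}$ in the complex-valued case and running a direct rank argument on the resulting two-term expression for $QP^{\mydeg}$.
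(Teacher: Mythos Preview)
Your overall strategy---split into two cases according to whether $Q$ vanishes identically on $\{P=0\}$ near $\mypt$, factor out a maximal power of $P$ in the second case, and finish with the monotonicity of $\rank P^j$---is exactly the paper's. The gap you flag in the complex-valued case is genuine, and neither proposed workaround is what the paper does or is clearly viable: no reduction to real-valued $P$ is available at this point in the argument (the later normal form in \cref{lem:fullranknormform} is not real-valued in general), and the decomposition $Q=\alpha P+\beta\bar P$ does not lead to a usable rank bound for $QP^{\mydeg}$.

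The paper's fix is to run the entire dichotomy in the \emph{polarized} space rather than on the real zero set. One asks whether the polarized $Q(z,\zeta)$ vanishes identically on the polarized hypersurface $\{P(z,\zeta)=0\}$ near the origin. If not, there is a point $(p,q)$---not necessarily on the diagonal $q=\bar p$---arbitrarily close to the origin with $P(p,q)=0$ and $Q(p,q)\neq 0$, and \cref{prop:polarbiholranksame} allows independent translations in the holomorphic and antiholomorphic variables to move to $(p,q)$. If yes, then after invoking \cref{lem:rank1or0} to assume $P$ irreducible and translating to a smooth point of the polarized hypersurface, $P$ generates the ideal of germs of holomorphic functions (in $(z,\zeta)$) vanishing there, so $P\mid Q$ directly. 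This handles real- and complex-valued $P$ uniformly and bypasses the totally-real-slice argument altogether.
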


\begin{proof}
First, we can assume that $\mypt=0$.
If the polarized $Q$ is nonzero at some point on the zero set
of the polarized $P$, then it is nonzero on a point arbitrarily close to
the origin in the polarized space and hence we
can apply a small translation (which can be different in the
$(z,w)$ and in the $(\zeta,\eta)$ variables)
and via
\cref{prop:polarbiholranksame} work at a point where $Q$ is not equal
to $0$.

Otherwise, the polarized $Q$ is zero on the zero set of the polarized $P$.
By \cref{lem:rank1or0}, we can assume that $P$ is irreducible, and 
so after another possible translation we can also assume that the derivative
of $P$ does not vanish at the origin.
Therefore,
$P$ generates the ideal of germs at the origin of holomorphic functions (in the polarized space)
vanishing on the zero set of $P$.
As the zero set of $P$ is contained on the zero set of $Q$,
$P$ divides $Q$ in the ring of germs of holomorphic functions at the origin.
Thus, there is a real-analytic function $Q'$ 
in a neighborhood of the origin and a positive integer $\mydeg'$ such that
$Q = Q' P^{\mydeg'}$
and $Q'$ is not identically zero on the zero set of $P$.
Assuming the result holds for $P$ and $Q'$,
we get $\rank P^\mydeg = \binom{\rank P + \mydeg - 1}{\mydeg}$ and
\begin{equation}
\begin{split}
    \rank Q P^\mydeg 
    &= \rank Q' P^{\mydeg'+\mydeg} \\
    &\geq \rank P^{\mydeg'+\mydeg} \\
    &= \binom{\rank P + \mydeg' + \mydeg - 1}{\mydeg} \\
    &\geq \binom{\rank P + \mydeg - 1}{\mydeg} \\
    &= \rank P^\mydeg,
\end{split}
\end{equation}
and the result holds.
\end{proof}

\begin{lemma}
\label[lemma]{lem:normallinear}
Let $P$ a polynomial of bidegree at most $(1,1)$ such that $P(0)=0$
and polarized as above.
Then up to applying a linear change of coordinates in $(z,w)$ variables
and an independent linear change of coordinates in the $(\zeta,\eta)$
variables, and possibly swapping $(z,w)$ with $(\zeta,\eta)$,
we can assume that $P$ is of
one of three forms:
\begin{enumerate}
\item
$\displaystyle
    P   
    = w + \eta + \sum_{k \in [r]} z_k \zeta_k
        + \text{bidegree-$(1,1)$ terms involving $w$ or $\eta$} ,
$
\item
$\displaystyle
    P   
    = w + \sum_{k \in [r]} z_k \zeta_k
        + \text{bidegree-$(1,1)$ terms involving $w$ or $\eta$} ,
$
\item
$\displaystyle
    P   = \sum_{k \in [r]} z_k \zeta_k + w \eta .
$
\end{enumerate}
\end{lemma}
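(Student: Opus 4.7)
Since $P$ has bidegree at most $(1,1)$ and $P(0)=0$, its polarization decomposes as
\begin{equation}
P(z,w,\zeta,\eta) = L(z,w) + M(\zeta,\eta) + B(z,w,\zeta,\eta),
\end{equation}
where $L$ and $M$ are linear forms without constant term and $B$ is bilinear. The plan is to do a case analysis on which of $L, M$ vanish; the three cases correspond exactly to the three normal forms (1), (2), (3).

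First I will handle the case when both $L$ and $M$ are nonzero. Completing $L$ to a basis of $(z,w)$ gives a linear change of coordinates after which $L = w$, and similarly an independent linear change in $(\zeta,\eta)$ makes $M = \eta$. In the new coordinates $P = w + \eta + B'$. I then split the bilinear part as $B' = \sum_{i,j=1}^{n-1} c_{ij} z_i \zeta_j + (\text{terms containing } w \text{ or } \eta)$ and apply a further pair of linear changes---one acting on $(z_1,\ldots,z_{n-1})$ alone (so fixing $w$) and one on $(\zeta_1,\ldots,\zeta_{n-1})$ alone (fixing $\eta$)---chosen so that $[c_{ij}]$ is brought to rank-normal form $I_r \oplus 0$. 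This converts the block to $\sum_{k \in [r]} z_k \zeta_k$, while the remaining terms only get mixed among themselves within the space of bidegree-$(1,1)$ monomials involving $w$ or $\eta$; this yields form (1).

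If exactly one of $L, M$ is nonzero, then after possibly swapping $(z,w)$ with $(\zeta,\eta)$ I may assume $L \neq 0$ and $M = 0$. This swap corresponds to transposing the matrix of coefficients, so it preserves $\rank P^\mydeg$ and $\rank Q P^\mydeg$, as well as all hypotheses (the bidegree $(1,1)$ is symmetric, and $Q(\bar z, z)$ is real-analytic whenever $Q(z,\bar z)$ is). Now set $L = w$ and normalize the $z\zeta$ block as in the previous case to obtain form (2). Finally, if $L = M = 0$ then $P$ is purely bilinear: writing $u = (z_1,\ldots,z_{n-1},w)$ and $v = (\zeta_1,\ldots,\zeta_{n-1},\eta)$, $P = \sum_{i,j=1}^n B_{ij} u_i v_j$ with $\rank P = \rank [B_{ij}] =: s$. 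Applying independent invertible linear changes in $u$ and $v$ brings $P$ to $\sum_{k=1}^s u'_k v'_k$, and relabeling the $s$-th coordinates as $(w,\eta)$, the first $s-1$ pairs as $(z_k,\zeta_k)$, and the unused coordinates as additional $z$'s and $\zeta$'s yields form (3) with $r = s - 1$. The case $s \leq 1$ still fits under form (3) (with $r = 0$, giving $P = w\eta$ when $s = 1$) and is also covered by \cref{lem:rank1or0}.

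The main obstacle to watch is the bookkeeping in the first two cases: one must verify that the final $z$-only and $\zeta$-only changes do not destroy the linear terms $w$, $\eta$ nor produce new $zz$, $\zeta\zeta$, $z$-only, or $\zeta$-only monomials. Because those changes fix $w$ and $\eta$ and act only on the holomorphic or only on the antiholomorphic side, no such monomials can be created, so the template of the relevant normal form is preserved throughout.
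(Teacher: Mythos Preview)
Your proof is correct and follows essentially the same approach as the paper's: split $P$ into its linear parts in $(z,w)$ and $(\zeta,\eta)$ and its bilinear part, normalize the linear pieces to $w$ and/or $\eta$ by completing to bases, do the case split on which linear parts survive (with the swap to merge the $\eta$-only case into the $w$-only case), and then diagonalize the relevant bilinear block---the $z\zeta$ submatrix in cases (1) and (2), the full $(z,w)\times(\zeta,\eta)$ matrix in case (3)---by independent linear changes on the two sides. Your bookkeeping remark that the final changes fix $w$ and $\eta$ and therefore cannot produce forbidden monomials is exactly the point the paper leaves implicit, and your treatment of the $s\le 1$ edge case via \cref{lem:rank1or0} matches how the paper handles it.
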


\begin{proof}
An arbitrary linear changes of coordinates may be
made in $(z,w)$ and $(\zeta,\eta)$ independently.
If $P(z, w, \zeta, \eta)$ has nonzero linear term in $(z, w)$, 
we can transform it into only $w$ 
by a linear transformation followed by a nonzero scaling in coordinates $(z, w)$
without affecting the constant.

Similarly,
if $P(z, w, \zeta, \eta)$ has nonzero linear term in $(\zeta, \eta)$, 
we can transform it into only $\eta$ 
by a linear transformation followed by a nonzero scaling in coordinates $(\zeta, \eta)$
without affecting the constant.

This gives us the following $\matcoeff{P}$,
where only known terms are shown.
\begin{equation}
    \matcoeff{P}
    =
    \begin{bmatrix}
        0 & 0 & \dots & 0 & (0\text{ or }1) \\
        0 & \\
        \vdots \\
        0 & \\
        (0\text{ or }1)
    \end{bmatrix}.
\end{equation}
The linear term of $P(z, w, \zeta, \eta)$ becomes $w + \eta$, $w$, $\eta$, or $0$.
If there is a linear term $\eta$, we can swap $(z,w)$ with $(\zeta,\eta)$.

If the linear term of $P(z, w, \zeta, \eta)$ is $w + \eta$ or $w$,
we consider the submatrix of $\matcoeff{P}$ obtained by
disregarding the row corresponding to $\eta$ and the
column corresponding to $w$, that is, the
submatrix corresponding to products of $z$ with $\zeta$.
Linear transformations in the $z$ and $\zeta$ independently
transforms this 
submatrix into a matrix with $1$s and zeros on the diagonal.
That is, either
\begin{equation}
\label{eq:matrix-form-original-1}
    \matcoeff{P}
    =
    \begin{bmatrix}
        0   & 0   & 0 & 1 \\
        0   & I_r & 0 \\
        0   & 0   & 0 \\
        1
    \end{bmatrix}
\end{equation}
and
\begin{equation}
\label{eq:normal-form-original-1}
\begin{split}  
    P   
    &= w + \eta + \sum_{k \in [r]} z_k \zeta_k
        + \text{bidegree-$(1,1)$ terms involving $w$ or $\eta$} \\
    &= w \Bigl(1 + \sum_{k \in [n']} P_{\zeta_k} \zeta_k + P_\eta \eta \Bigr)
    + \sum_{k \in [r]} z_k (\zeta_k + P_{z_k} \eta)
    + \sum_{k > r} z_k (P_{z_k} \eta)
    + \eta,
\end{split}
\end{equation}
or
\begin{equation}
\label{eq:matrix-form-original-2}
    \matcoeff{P}
    =
    \begin{bmatrix}
        0   & 0   & 0 & 1 \\
        0   & I_r & 0 \\
        0   & 0   & 0 \\
        0
    \end{bmatrix}
\end{equation}
and
\begin{equation}
\label{eq:normal-form-original-2}
\begin{split}
    P   
    &= w + \sum_{k \in [r]} z_k \zeta_k
        + \text{bidegree-$(1,1)$ terms involving $w$ or $\eta$} \\
    &= w \Bigl(1 + \sum_{k \in [n']} P_{\zeta_k} \zeta_k + P_\eta \eta \Bigr)
    + \sum_{k \in [r]} z_k (\zeta_k + P_{z_k} \eta)   
    + \sum_{k > r} z_k (P_{z_k} \eta)
\end{split}
\end{equation}
for some $0 \leq r \leq n'$ and
some $P_{\zeta_k}$, $P_{z_k}$, and $P_\eta$ for $k \in [n']$.
Here, $r = 0$ means that the sum $\sum_{k \in [r]} z_k \zeta_k$ is vacuous
and the rows and columns of $(P)$ corresponding to $I_r$ are nonexistent.

If the linear term of $P(z, w, \zeta, \eta)$ is $0$,
we consider the remaining submatrix corresponding to products of $(z, w)$ with $(\zeta, \eta)$.
Making independent linear transformations in $(z,w)$ and $(\zeta,\eta)$
we can transform
this submatrix
into a diagonal matrix with $1$s and $0$s on the diagonal.
We can make sure that the term corresponding to $w\eta$ is $1$ (as the
matrix is not the zero matrix) and
we order the $1$s to come first in the $z$ and $\zeta$ coordinates
as before.
That is,
\begin{equation}
\label{eq:matrix-form-original-3}
\matcoeff{P} = 
    \begin{bmatrix}
        0 & 0   & 0 & 0 \\
        0 & I_r & 0 & 0 \\
        0 & 0   & 0 & 0 \\
        0 & 0   & 0 & 1
    \end{bmatrix}
\end{equation}
and
\begin{equation}
\label{eq:normal-form-original-3}
    P   = \sum_{k \in [r]} z_k \zeta_k + w \eta
\end{equation}
for some $0 \leq r \leq n'$.
\end{proof}

\begin{lemma}
\label[lemma]{lem:fullranknormform}
    To prove \cref{thm:rank}, it is sufficient to prove the
conclusion of the theorem when $\mypt=0$, $Q$ is nonzero at the origin,
and the polarized $P$ is
\begin{equation}
\label{eq:normal-form}
    P   = w + \eta + z \cdot \zeta
        + \text{bidegree-$(1,1)$ terms involving $w$ or $\eta$},
\end{equation}
where $\cdot$ is the standard bilinear product.
\end{lemma}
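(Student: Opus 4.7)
The plan is to reduce an arbitrary instance of \cref{thm:rank} to the claimed normal form via a sequence of coordinate changes and dimension reductions. Combining \cref{lem:normalQnotzero} and \cref{lem:normallinear} already delivers $\mypt=0$, $Q(0)\neq 0$, and $P$ in one of the three forms labeled~(1),~(2),~(3) in \cref{lem:normallinear}, each with some $0 \leq r \leq n'$. What remains is to drive every such configuration to Form~(1) with $r = n'$, which is precisely the present claim.

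Form~(1) with $r<n'$ is the easiest: applying \cref{lem:reduce-rank} with $z_k=\zeta_k=0$ for $k>r$ reduces to Form~(1) in the smaller dimension $n'=r$. A direct inspection of~\eqref{eq:matrix-form-original-1} shows $\rank P = r+2$ regardless of $n'$, so $\rank P^\mydeg$ is preserved. For Form~(2), I would use \cref{prop:polarbiholranksame} to translate the polarized expansion point by $z \mapsto z+a$ for a small $a \in \C^{n'}$, keeping $(w,\zeta,\eta)$ fixed; because Form~(2) has no constant, $\zeta$, or $\eta$ linear terms, $P(a,0,0,0)=0$ and the new point remains on the polarized zero set of $P$. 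The translated polynomial acquires the linear terms $\sum_{k\le r} a_k\zeta_k + \bigl(\sum_k a_k P_{z_k}\bigr)\eta$ in addition to the original $w$, and for a suitable $a$ these combine, via an independent linear change of $(\zeta,\eta)$, into a single $\tilde\eta$, producing Form~(1) with some new $r'\leq n'$. Reapplying the Form~(1) dimension reduction finishes this case. The only degenerate subcase, where no $a$ yields a nonzero $(\zeta,\eta)$-linear part, forces $r=0$ and every $P_{z_k}=0$, whence $P$ factors as a holomorphic times an anti-holomorphic factor and \cref{lem:rank1or0} applies.

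Form~(3) is the most delicate. First, the variables $z_k,\zeta_k$ with $k>r$ do not appear in $P$ at all, so \cref{lem:reduce-rank} immediately reduces to $r=n'$. Then translate by $z\mapsto z+a$ and $\zeta\mapsto\zeta+c$ for small nonzero $a,c\in\C^{n'}$ satisfying $a\cdot c=0$, which keeps the new point on the polarized zero set of $P$; the linear terms $\sum_k c_k z_k + \sum_k a_k\zeta_k$ so produced normalize to $\tilde w+\tilde\eta$ by independent linear changes in $(z,w)$ and $(\zeta,\eta)$. Substituting back reveals that the pure block in the new variables contains an $I+uv^\top$ factor with $v^\top u=-1$, coming directly from $a\cdot c=0$, so its rank drops to $n'-1$ rather than $n'$; one additional Form~(1) dimension reduction then lands in Form~(1) with $r$ equal to the new $n'$.

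The main obstacle is the bookkeeping on the coefficient matrix $\matcoeff{P}$ that certifies $\rank P$ (and hence $\rank P^\mydeg$) is preserved through each translation and change of coordinates, with the rank-one deficiency in Form~(3) being the subtlest point. A secondary issue is ensuring that $Q$ stays nonzero at each new expansion point; this follows from continuity as long as the translations are taken sufficiently small relative to the neighborhood on which $Q(0)\neq 0$.
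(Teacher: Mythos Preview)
Your reduction strategy matches the paper's almost exactly: invoke \cref{lem:normalQnotzero} and \cref{lem:normallinear}, handle Form~(1) with $r<n'$ by restricting to the first $r$ pairs of variables via \cref{lem:reduce-rank}, and then push Forms~(2) and~(3) into Form~(1) by a small translation along the polarized zero set followed by a fresh application of \cref{lem:normallinear}. The paper does precisely this, only with very concrete one-coordinate moves (e.g.\ $z_r\mapsto z_r+\epsilon$ together with the swap $\eta\leftrightarrow\zeta_r$) in place of your generic vectors $a$ and $c$.

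There is one genuine gap in your Form~(3) argument. After you restrict to $r=n'$ you need nonzero $a,c\in\C^{n'}$ with $a\cdot c=0$; when $\rank P=2$ this forces $r=n'=1$, and no such pair exists in $\C^1$. The repair is immediate: since Form~(3) is simply $P=(z,w)\cdot(\zeta,\eta)$, translate in the full vectors $(z,w)$ and $(\zeta,\eta)$ rather than only in $z$ and $\zeta$, so that the orthogonality constraint lives in $\C^{n'+1}$ with $n'+1\ge 2$. The paper sidesteps the issue entirely by translating a single coordinate on each side.

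Finally, the $I+uv^\top$ mechanism you propose for the rank-one drop is more than you need, and its precise form depends on which basis completion you pick. All of your moves---translations and independent linear changes in $(z,w)$ and in $(\zeta,\eta)$---preserve $\rank P$ outright by \cref{prop:polarbiholranksame}, while the restriction to fewer variables preserves it by direct inspection of the normal-form matrix. Hence once you land in Form~(1) the parameter $r'$ is forced by $r'+2=\rank P$, and no separate block-matrix computation is required; the ``main obstacle'' you flag is therefore not actually an obstacle.
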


We remark that if $n=1$, then we mean that
$P=w+\eta+C w\eta$ for some constant $C$.

\begin{proof}
As we said, it is sufficient to assume that $\mypt=0$.
We can also assume that rank of $P$ is at least $2$ via \cref{lem:rank1or0},
and we can assume that $Q$ is not zero at the origin.
We will work in the polarized setting as before and
treat $(z,w)$ and $(\zeta,\eta)$ as independent.

By the previous lemma, it is sufficient for $\matcoeff{P}$ to
be of one of three different forms
\cref{eq:matrix-form-original-1,eq:matrix-form-original-2,eq:matrix-form-original-3}.
The matrix $\matcoeff{P}$ is of full rank, that is, of rank $r + 2 = n + 1 = n' + 2$ 
if and only if it is of the form \cref{eq:matrix-form-original-1} and $r = n'$, that is, 
it is of the form
\begin{equation}
\label{eq:matrix-form}
    \matcoeff{P}
    =
    \begin{bmatrix}
        0 & 0      & 1 \\
        0 & I_{n'} \\
        1
    \end{bmatrix}
\end{equation}
and
\begin{equation}
    P   = w + \eta + z \cdot \zeta
        + \text{bidegree-$(1,1)$ terms involving $w$ or $\eta$}.
\end{equation}
Assume that result holds for this form.

First, assume $\matcoeff{P}$ is of the form
\cref{eq:matrix-form-original-1} but not of full rank;
then
$\rank P = r + 2$ and $r \geq 0$.
We let $P'$ and $Q'$ be the polynomial and the real-analytic function obtained from 
setting $z_{r+1}, \dots, z_{n'}$ and $\zeta_{r+1}, \dots, \zeta_{n'}$ to zero 
in the Taylor series of $P$ and $Q$ respectively to find that
\begin{equation}
    P' 
    = w \Bigl(1 + \sum_{j \in [r]} P_{\zeta_j} \zeta_j \Bigr)
    + \sum_{j \in [r]} z_j (\zeta_j + P_{z_j} \eta)
    + \eta.
\end{equation}
Note that $P'(z_1, \dots, z_r, w, \zeta_1, \dots, \zeta_r, \eta)$ and 
$Q'(z_1, \dots, z_r, w, \zeta_1, \dots, \zeta_r, \eta)$ satisfy the hypotheses of the result under discussion with $n' = r$, and
$\rank P' = \rank P = r + 2$.
So $P'$ is of the form \cref{eq:normal-form}, and thus the result follows.

Next, assume $\matcoeff{P}$ is of the form \cref{eq:matrix-form-original-2}.
Since $\rank P = r + 1$, we get $r \geq 1$.
Let $\epsilon > 0$ be small enough so that the point
where all (polarized) variables being zero except $z_r = \epsilon$
is still within the domain of convergence of the polarized $Q$
and such that $Q$ is not zero at this point.
By \cref{prop:polarbiholranksame}, we can move to this point.
That is,
we change variables by replacing $z_r$ with $z_r+\epsilon$
and swapping $\eta$ and $\zeta_r$.  In these new coordinates,
$P$ has a linear term in $w$ and a linear term in $\eta$,
and still vanishes at the origin.
We can now apply the normalization of \cref{lem:normallinear}
and we find that $P$ is of the form \cref{eq:matrix-form-original-1}
that we already handled.

Finally, assume $\matcoeff{P}$ is of the form \cref{eq:matrix-form-original-3}.
Since $\rank P = r + 1$, we get $r \geq 1$.
Again let 
$\epsilon > 0$ be small enough so that the point
where all (polarized) variables being zero except $z_r = \epsilon$ and $w = \epsilon$
is still within the domain of convergence of the polarized $Q$
and such that $Q$ is nonzero there.
By \cref{prop:polarbiholranksame}, we can move to this point,
and this time we 
change variables by replacing $z_r$ with $z_r+\epsilon$
and $w$ with $w+\epsilon$,
and swapping $\eta$ and $\zeta_r$.  Again, this creates a
linear term in both $w$ and $\eta$ and after normalization by
\cref{lem:normallinear}, we reduce to the form
\cref{eq:matrix-form-original-1}.
\end{proof}


\section{Zeros and Nonzeros of Matrices}

With the aid of \cref{lem:fullranknormform} and after recovering
$P = P(z, w, \bar{z}, \bar{w})$ and $Q = Q(z, w, \bar{z}, \bar{w})$ 
by setting $\zeta = \bar{z}$ and $\eta = \bar{w}$ in the polarizations
$P(z, w, \zeta, \eta)$ and $Q(z, w, \zeta, \eta)$, 
the proof of \cref{thm:rank} now reduces to the following.

\begin{lemma}
\label[lemma]{lem:hypothesis}
Let $n \geq 1$, $\mydeg \geq 0$,
$P$ be a polynomial in $\C^n$ of the normal form
\begin{equation}
\label{eq:normal-form-2}
    P   = w + \bar{w} + \norm{z}^2
        + \text{bidegree-$(1,1)$ terms involving $w$ or $\bar{w}$},
\end{equation}
and $Q$ be real-analytic in a neighborhood of the origin in $\C^n$ with $Q(0) \neq 0$.
Then
\begin{equation}
    \rank Q P^\mydeg \geq \rank P^\mydeg = \binom{\rank P + \mydeg - 1}{\mydeg}.
\end{equation}
\end{lemma}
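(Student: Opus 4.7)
The plan is, working in the reduced setting of \cref{lem:fullranknormform}, to exhibit a $\binom{n+\mydeg}{\mydeg}\times\binom{n+\mydeg}{\mydeg}$ submatrix of $\matcoeff{QP^\mydeg}$ that is, after a suitable reordering of rows and columns, upper-triangular with nonvanishing diagonal.  Since $\rank P = n+1$ in the normal form and \cref{prop:rank-Pd} gives $\rank P^\mydeg\leq\binom{n+\mydeg}{\mydeg}$, this will at once yield both $\rank P^\mydeg=\binom{n+\mydeg}{\mydeg}$ and $\rank QP^\mydeg\geq\rank P^\mydeg$.

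Working in the polarized coordinates $(z,w,\zeta,\eta)$, I would take the submatrix of $\matcoeff{QP^\mydeg}$ with rows indexed by all multi-indices $\tilde\alpha$ with $|\tilde\alpha|\leq\mydeg$, corresponding to the monomials $z^{(\tilde\alpha_1,\dots,\tilde\alpha_{n-1})}w^{\tilde\alpha_n}$, and columns indexed analogously by $\tilde\beta$.  The combinatorial heart of the proof is a support description for $\matcoeff{P^\mydeg}$.  Every monomial of the polarized $P$ has polarized total degree $1$ (the linear terms $w,\eta$) or $2$ (the bilinear terms $z_k\zeta_k$, $z_k\eta$, $w\zeta_k$, $w\eta$), and a transportation-style count of how many factors of each type must appear in a factorization producing $X^{\tilde\alpha}\Xi^{\tilde\beta}$ shows that $(P^\mydeg)_{\tilde\alpha,\tilde\beta}$ vanishes unless both $|\tilde\alpha|+\tilde\beta_n\geq\mydeg$ and $|\tilde\beta|+\tilde\alpha_n\geq\mydeg$.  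When both inequalities are tight, the count forces each ``extra'' bilinear term of $P$ (those whose coefficients are free in \cref{lem:normallinear}) to be used with multiplicity zero, so a valid factorization exists only when $\tilde\alpha_k=\tilde\beta_k$ for every $k<n$; in that doubly-tight, matching case the entry reduces to the positive multinomial $\binom{\mydeg}{\mydeg-|\tilde\alpha|,\tilde\alpha_n,\tilde\alpha_1,\dots,\tilde\alpha_{n-1}}$ coming from the core $w+\eta+z\cdot\zeta$.  Feeding this into $(QP^\mydeg)_{\tilde\alpha,\tilde\beta}=\sum Q_{\tilde\alpha',\tilde\beta'}(P^\mydeg)_{\tilde\alpha-\tilde\alpha',\tilde\beta-\tilde\beta'}$ transfers both inequalities to $QP^\mydeg$, and in the doubly-tight case only the summand with $\tilde\alpha'=\tilde\beta'=0$ contributes, so that entry equals $Q(0)$ times the same multinomial.

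Next I would order the index set $S=\{\tilde\beta:|\tilde\beta|\leq\mydeg\}$ lexicographically by the pair $(|\tilde\beta|,\tilde\beta_n)$ with arbitrary tiebreakers, and pair each column $\tilde\beta$ with the row $\phi(\tilde\beta)=(\tilde\beta_1,\dots,\tilde\beta_{n-1},\mydeg-|\tilde\beta|)$, an involution of $S$ realizing the diagonal of $(w+\eta+z\cdot\zeta)^\mydeg$.  If row $\phi(\tilde\beta')$ sits strictly below column $\tilde\beta$ in this order, then either $|\tilde\beta'|>|\tilde\beta|$, violating $|\tilde\beta|+\phi(\tilde\beta')_n\geq\mydeg$; or $|\tilde\beta'|=|\tilde\beta|$ and $\tilde\beta'_n>\tilde\beta_n$, violating $|\phi(\tilde\beta')|+\tilde\beta_n\geq\mydeg$; or $\tilde\beta'$ and $\tilde\beta$ agree in both statistics yet differ in some $z$-component, in which case both support inequalities are tight but the matching $\tilde\beta'_k=\tilde\beta_k$ fails, again forcing the entry to vanish.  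Each diagonal entry equals $Q(0)$ times a positive multinomial, so the submatrix is upper-triangular with nonvanishing diagonal and hence of full rank $\binom{n+\mydeg}{\mydeg}$.

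The main technical hurdle is proving the support description for $\matcoeff{P^\mydeg}$ precisely enough: one parameterizes factorizations producing a prescribed $(\tilde\alpha,\tilde\beta)$ by the multiplicities of each of the six types of terms of $P$, solves the resulting small linear system on those multiplicities to see that existence is governed by the two inequalities, and verifies that at the extremal doubly-tight configuration the unique solution uses only the core monomials $w$, $\eta$, and $z_k\zeta_k$.  Once this transportation-style counting is in place, the triangular shape and the explicit nonzero form of the diagonal follow by bookkeeping.
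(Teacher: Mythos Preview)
Your approach is correct and genuinely different from the paper's, though both identify the same pivot set $\mathcal{P}_\mydeg=\{z^\alpha w^b\bar z^\alpha\bar w^\delta:|\alpha|+b+\delta=\mydeg\}$.  The paper proves its vanishing statement (\cref{lem:Pd}) only on the region $\mathcal{B}_\mydeg=\{|\alpha|+b+\delta\leq\mydeg\text{ and }|\gamma|+b+\delta\leq\mydeg\}$; entries outside $\mathcal{B}_\mydeg$ are left unknown, and so the pivots are not immediately leading entries.  The paper then runs an inductive row-reduction argument (\cref{lem:pivots}) stratified by the parameter $t=\mydeg-|b-\delta|$ to clear those unknown entries step by step.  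Your support claim is strictly stronger: you observe that every monomial of $P$ contributes at least $1$ to each of the two statistics $|\alpha|+b+\delta$ and $|\gamma|+b+\delta$, so after multiplying $\mydeg$ factors both statistics are $\geq\mydeg$ everywhere in $\mathcal{A}_\mydeg$, not just on $\mathcal{B}_\mydeg$.  That extra vanishing is exactly what lets you order rows and columns by $(|\tilde\beta|,\tilde\beta_n)$ lex, pair via $\phi$, and read off an upper-triangular matrix directly, bypassing \cref{lem:pivots} entirely.  The trade-off is that your ``transportation-style count'' needs to be written out carefully (the six ``types'' really comprise $3(n-1)+3$ monomials, and the two inequalities come from two separate additive invariants rather than from the total polarized degree you mention first), but once that support description is nailed down your route is shorter and more transparent than the paper's iterative reduction.
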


We again remark that by $n=1$, we mean that $P=w+\bar{w} + C w\bar{w}$ for
some constant $C$.
Fix $\mydeg \geq 0$.

\begin{notation}
\label[notation]{notation:coeff}
For 
$
    R(z,w, \bar{z},\bar{w}) = \sum R_{\ea\eb\ec\ed} z^\ea w^\eb \bar{z}^\ec \bar{z}^\ed,
$
we will denote the coefficient 
$R_{\ea\eb\ec\ed}$ of $Z = z^\ea w^\eb \bar{z}^\ec \bar{z}^\ed$ 
in $R$ by 
\begin{equation}    
    \coeff{Z}{R} = \coeff{z^\ea w^\eb \bar{z}^\ec \bar{z}^\ed}{R}.
\end{equation}

We will also write $\matcoeff{R}_\mydeg$ to denote the finite submatrix of
$\matcoeff{R}$ corresponding to monomials of bidegree at most $(\mydeg, \mydeg)$.
\end{notation}

Notice that since $P$ is of bidegree at most $(1,1)$, $P^\mydeg$ is of bidegree at most $(\mydeg,\mydeg)$.  Thus $\matcoeff{P^\mydeg}=\matcoeff{P^\mydeg}_\mydeg$.  The matrix
$\matcoeff{Q P^\mydeg}$ is an infinite matrix, but we will focus on the finite submatrix $\matcoeff{Q P^\mydeg}_\mydeg$.

We want to prove that both $\matcoeff{P^\mydeg}_\mydeg$ and $\matcoeff{Q P^\mydeg}_\mydeg$ are of full rank
by row reduction on both matrices.
This section will describe certain zero and nonzero entries
of the matrices $\matcoeff{P^\mydeg}$ and $\matcoeff{Q P^\mydeg}$,
which are critical to understanding their ranks.
With this goal in mind, we form the following index sets.
\begin{align}
    \mathcal{A}_\mydeg &= \set{
        z^\ea w^\eb \bar{z}^\ec \bar{z}^\ed \suchthat \abs{\ea} + \eb \leq \mydeg, \abs{\ec} + \ed \leq \mydeg
    }, \\
    \mathcal{B}_\mydeg &= \set{
        z^\ea w^\eb \bar{z}^\ec \bar{z}^\ed \suchthat \abs{\ea} + \eb + \ed \leq \mydeg, \abs{\ec} + \eb + \ed \leq \mydeg
    } \subset \mathcal{A}_\mydeg, \\
    \mathcal{P}_\mydeg &= \set{
        z^\ea w^\eb \bar{z}^\ec \bar{z}^\ed \suchthat \abs{\ea} + \eb + \ed = \mydeg, \ea = \ec
    } \subset \mathcal{B}_\mydeg, \text{ and} \\
    \mathcal{N}_\mydeg &= \mathcal{B}_\mydeg \setminus \mathcal{P}_\mydeg \\
        &= \set{
        z^\ea w^\eb \bar{z}^\ec \bar{z}^\ed \suchthat
        \abs{\ea} + \eb + \ed < \mydeg \text{ or }
        \abs{\ec} + \eb + \ed < \mydeg \text{ or } \notag \\
        & \phantom{{}=\{} (\abs{\ea} + \eb + \ed = \abs{\ec} + \eb + \ed = \mydeg, \ea \neq \ec)
    } \cap \mathcal{B}_\mydeg \subset \mathcal{B}_\mydeg. \notag
\end{align}

The following few results use the notation from~\cref{notation:coeff}.

\begin{remark}
\label[remark]{rem:Z1-P1}
\begin{enumerate}
    \item 
$\mathcal{N}_0 = \emptyset$ and $\mathcal{P}_0 = \set{1}$.

    \item 
$\mathcal{N}_1 = \set{1, z_j, \bar{z}_j, z_j \bar{z}_k \suchthat j, k \in [n'], j \neq k}$ and 
$\mathcal{P}_1 = \set{w, \bar{w}, z_j \bar{z}_j \suchthat j \in [n']}$.

    \item 
$\coeff{Z}{P} = 0$ for every $Z \in \mathcal{N}_1$
and $\coeff{Z}{P} = 1$ for every $Z \in \mathcal{P}_1$
for $P$ of the normal form \cref{eq:normal-form-2}.
\end{enumerate}
\end{remark}

\begin{definition}
    We say that the monomial $\monomial{\ee}{\ef}{\eg}{\eh}$ is smaller than or equal to
    the monomial $\monomial{\ea}{\eb}{\ec}{\ed}$ and write 
    $\monomial{\ee}{\ef}{\eg}{\eh} \preccurlyeq \monomial{\ea}{\eb}{\ec}{\ed}$
    or equivalently $(\ee, \ef, \eg, \eh) \preccurlyeq (\ea, \eb, \ec, \ed)$,
    if
    $\ee \leq \ea, \ef \leq \eb, \eg \leq \ec$, and $\eh \leq \ed$.

    We say that the monomial $\monomial{\ee}{\ef}{\eg}{\eh}$ is smaller than 
    the monomial $\monomial{\ea}{\eb}{\ec}{\ed}$ and write 
    $\monomial{\ee}{\ef}{\eg}{\eh} \prec \monomial{\ea}{\eb}{\ec}{\ed}$
    or equivalently $(\ee, \ef, \eg, \eh) \prec (\ea, \eb, \ec, \ed)$,
    if
    $\ee \leq \ea, \ef \leq \eb, \eg \leq \ec$, and $\eh \leq \ed$, but 
    $(\ee, \ef, \eg, \eh) \neq (\ea, \eb, \ec, \ed)$.
\end{definition}

We provide a few ways the four index sets interplay with one another.

\begin{proposition}
\label[proposition]{prop:order}
Let $(\ee, \ef, \eg, \eh) \preccurlyeq (\ea, \eb, \ec, \ed)$.

\begin{enumerate}
    \item 
If $(\ea-\ee, \eb-\ef, \ec-\eg, \ed-\eh) \neq (0, 0, 0, 0)$ or equivalently
$\monomial{\ee}{\ef}{\eg}{\eh} \prec \monomial{\ea}{\eb}{\ec}{\ed}$,
and $\monomial{\ea}{\eb}{\ec}{\ed} \in \mathcal{B}_\mydeg$,
then $\monomial{\ee}{\ef}{\eg}{\eh} \in \mathcal{N}_\mydeg$.

    \item 
If $\monomial{\ea-\ee}{\eb-\ef}{\ec-\eg}{\ed-\eh} \in \mathcal{A}_1 \setminus \mathcal{B}_1$
and $\monomial{\ea}{\eb}{\ec}{\ed} \in \mathcal{B}_\mydeg$,
then $\monomial{\ee}{\ef}{\eg}{\eh} \in \mathcal{N}_{\mydeg - 1}$.

    \item 
If $\monomial{\ea-\ee}{\eb-\ef}{\ec-\eg}{\ed-\eh} \in \mathcal{P}_1$
and $\monomial{\ea}{\eb}{\ec}{\ed} \in \mathcal{N}_\mydeg$,
then $\monomial{\ee}{\ef}{\eg}{\eh} \in \mathcal{N}_{\mydeg - 1}$.

    \item 
If $\monomial{\ea-\ee}{\eb-\ef}{\ec-\eg}{\ed-\eh} \in \mathcal{P}_1$
and $\monomial{\ea}{\eb}{\ec}{\ed} \in \mathcal{P}_\mydeg$,
then $\monomial{\ee}{\ef}{\eg}{\eh} \in \mathcal{P}_{\mydeg - 1}$.
\end{enumerate}
\end{proposition}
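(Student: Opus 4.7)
The plan is to argue each of the four statements by tracking how the two ``size sums'' $S_L := \abs{\ea} + \eb + \ed$ and $S_R := \abs{\ec} + \eb + \ed$ change under the quotient $(\ea', \eb', \ec', \ed') := (\ea - \ee, \eb - \ef, \ec - \eg, \ed - \eh)$. By the definitions, $(\ea, \eb, \ec, \ed) \in \mathcal{B}_\mydeg$ exactly when $S_L, S_R \leq \mydeg$, and lies in $\mathcal{P}_\mydeg$ exactly when additionally $S_L = S_R = \mydeg$ and $\ea = \ec$; the set $\mathcal{N}_\mydeg$ is then the complement of $\mathcal{P}_\mydeg$ in $\mathcal{B}_\mydeg$. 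All four claims follow by keeping careful track of these three quantities under the various allowed quotient types.

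For part~(1), membership $(\ee,\ef,\eg,\eh) \in \mathcal{B}_\mydeg$ is immediate from the monotonicity of $S_L$ and $S_R$ under $\preccurlyeq$. To exclude $\mathcal{P}_\mydeg$, I would argue by contradiction: if $(\ee,\ef,\eg,\eh) \in \mathcal{P}_\mydeg$, equality in the chain $\mydeg = \abs{\ee} + \ef + \eh \leq \abs{\ea} + \eb + \ed \leq \mydeg$ forces $\ee = \ea$, $\ef = \eb$, and $\eh = \ed$; then $\ee = \eg$ combined with the analogous chain for $S_R$ forces $\eg = \ec$, giving $(\ee,\ef,\eg,\eh) = (\ea,\eb,\ec,\ed)$ and contradicting strict descent.

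For part~(2), a finite enumeration shows $\mathcal{A}_1 \setminus \mathcal{B}_1 = \set{z_j\bar{w},\ w\bar{z}_k,\ w\bar{w} \suchthat j, k \in [n']}$, and for each of these quotient types at least one of $S_L$ or $S_R$ decreases by at least $2$ when passing from $(\ea,\eb,\ec,\ed)$ to $(\ee,\ef,\eg,\eh)$. Together with the $\mathcal{B}_\mydeg$ bounds this gives membership in $\mathcal{B}_{\mydeg-1}$ and simultaneously forces $\abs{\ee} + \ef + \eh < \mydeg - 1$ or $\abs{\eg} + \ef + \eh < \mydeg - 1$, which rules out $\mathcal{P}_{\mydeg-1}$ and so places $(\ee,\ef,\eg,\eh)$ in $\mathcal{N}_{\mydeg-1}$.

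For parts~(3) and~(4), \cref{rem:Z1-P1} identifies $\mathcal{P}_1 = \set{w,\ \bar{w},\ z_j \bar{z}_j \suchthat j \in [n']}$, and crucially every such quotient has $\ea' = \ec'$, so subtracting it reduces both $S_L$ and $S_R$ by exactly $1$ while leaving $\ea - \ec$ unchanged. For~(4), this immediately transfers the equalities $S_L = S_R = \mydeg$ and $\ea = \ec$ defining $\mathcal{P}_\mydeg$ to their counterparts at level $\mydeg - 1$. For~(3), $\mathcal{N}_\mydeg$ splits into three subcases according to which clause of its definition fails ($S_L < \mydeg$, or $S_R < \mydeg$, or $\ea \neq \ec$ with $S_L = S_R = \mydeg$), and in each subcase the corresponding obstruction is preserved by the quotient. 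The main difficulty is purely organizational: once everything is phrased uniformly in terms of $S_L$, $S_R$, and the equality $\ea = \ec$, each part becomes a short case analysis on the quotient type and the bookkeeping essentially writes itself.
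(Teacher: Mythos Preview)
Your proposal is correct and follows essentially the same approach as the paper: both proofs track the two sums $\abs{\ea}+\eb+\ed$ and $\abs{\ec}+\eb+\ed$ (your $S_L,S_R$) through the quotient and reduce each part to an elementary inequality/case check. The only cosmetic differences are that in part~(1) you argue by contradiction where the paper argues directly, and in part~(2) you enumerate $\mathcal{A}_1\setminus\mathcal{B}_1$ explicitly where the paper uses its defining inequality $S_L'>1$ or $S_R'>1$; neither changes the substance.
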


\begin{proof}{\ }
\begin{enumerate}
    \item 
Since $\monomial{\ee}{\ef}{\eg}{\eh} \prec \monomial{\ea}{\eb}{\ec}{\ed}$
and $\abs{\ea} + \eb + \ed \leq \mydeg, \abs{\ec} + \eb + \ed \leq \mydeg$,
we find that $\abs{\ee} + \ef + \eh < \abs{\ea} + \eb + \ed \leq \mydeg$
or $\abs{\eg} + \ef + \eh < \abs{\ec} + \eb + \ed \leq \mydeg$,
so the result follows.

    \item 
Since $\abs{\ea} - \abs{\ee} + \eb - \ef + \ed - \eh > 1$ or 
$\abs{\ec} - \abs{\eg} + \eb - \ef + \ed - \eh > 1$,
and $\abs{\ea} + \eb + \ed \leq \mydeg, \abs{\ec} + \eb + \ed \leq \mydeg$,
we find that $\abs{\ee} + \ef + \eh < \mydeg - 1$
or $\abs{\eg} + \ef + \eh < \mydeg - 1$,
so the result follows.

    \item 
Since $\abs{\ea} - \abs{\ee} + \eb - \ef + \ed - \eh = 
\abs{\ec} - \abs{\eg} + \eb - \ef + \ed - \eh = 1$, $\ea - \ee = \ec - \eg$,
and $\abs{\ea} + \eb + \ed < \mydeg$ or $\abs{\ec} + \eb + \ed < \mydeg$ or
($\abs{\ea} + \eb + \ed = \abs{\ec} + \eb + \ed = \mydeg, \ea \neq \ec$),
we find that $\abs{\ee} + \ef + \eh < \mydeg - 1$
or $\abs{\eg} + \ef + \eh < \mydeg - 1$ or
($\abs{\ee} + \ef + \eh = \abs{\eg} + \ef + \eh = \mydeg - 1, \ee \neq \eg$),
so the result follows.

    \item 
Since $\abs{\ea} - \abs{\ee} + \eb - \ef + \ed - \eh = 
\abs{\ec} - \abs{\eg} + \eb - \ef + \ed - \eh = 1$, $\ea - \ee = \ec - \eg$,
and $\abs{\ea} + \eb + \ed = \abs{\ec} + \eb + \ed = \mydeg, \ea = \ec$,
we find that $\abs{\ee} + \ef + \eh = \abs{\eg} + \ef + \eh = \mydeg - 1, \ee = \eg$,
so the result follows.
\end{enumerate}
\end{proof}

\subsection{Description of the Matrix \texorpdfstring{$\matcoeff{P^\mydeg}$}{Pd}}

Now we are ready to demonstrate that the matrix $\matcoeff{P^\mydeg}$ has a lot of zeros and nonzeros
regardless of the unknown bidegree-$(1,1)$ terms in the normal form~\cref{eq:normal-form-2} of $P$.

\begin{lemma}
\label[lemma]{lem:Pd}
\pagebreak[2]
    Let $P$ be of the normal form~\cref{eq:normal-form-2} and $\mydeg \geq 0$.
    Then
    \begin{enumerate}
        \item 
    For every $Z \in \mathcal{N}_\mydeg$, $[Z] P^\mydeg = 0$.
        \item 
    For every $Z \in \mathcal{P}_\mydeg$, $[Z] P^\mydeg > 0$.
    \end{enumerate}
\end{lemma}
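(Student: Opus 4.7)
The plan is to induct on $\mydeg$. The base case $\mydeg = 0$ is immediate since $P^0 = 1$, $\mathcal{N}_0 = \emptyset$, and $\mathcal{P}_0 = \set{1}$ by \cref{rem:Z1-P1}. For the inductive step, assume the conclusion holds for $\mydeg - 1$ and extract coefficients from $P^\mydeg = P \cdot P^{\mydeg - 1}$: for $Z = \monomial{\ea}{\eb}{\ec}{\ed}$,
\begin{equation}
\coeff{Z}{P^\mydeg} = \sum_{(\ee, \ef, \eg, \eh) \preccurlyeq (\ea, \eb, \ec, \ed)} \coeff{Z/Z'}{P} \, \coeff{Z'}{P^{\mydeg - 1}},
\end{equation}
where $Z' = \monomial{\ee}{\ef}{\eg}{\eh}$ and $Z/Z' = \monomial{\ea-\ee}{\eb-\ef}{\ec-\eg}{\ed-\eh}$. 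Since $P$ has bidegree at most $(1,1)$, only terms with $Z/Z' \in \mathcal{A}_1$ can contribute, and $\mathcal{A}_1$ partitions as $\mathcal{P}_1 \sqcup \mathcal{N}_1 \sqcup (\mathcal{A}_1 \setminus \mathcal{B}_1)$. By \cref{rem:Z1-P1}, $\coeff{Z/Z'}{P}$ is $0$ on $\mathcal{N}_1$, equals $1$ on $\mathcal{P}_1$, and is some unspecified (possibly zero) constant on $\mathcal{A}_1 \setminus \mathcal{B}_1$.

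I would then case-split on which piece of this partition contains $Z/Z'$. If $Z/Z' \in \mathcal{N}_1$, the term is zero. If $Z/Z' \in \mathcal{A}_1 \setminus \mathcal{B}_1$, then since $Z \in \mathcal{N}_\mydeg \cup \mathcal{P}_\mydeg \subset \mathcal{B}_\mydeg$ in both conclusions, \cref{prop:order}(2) places $Z' \in \mathcal{N}_{\mydeg - 1}$, and the inductive hypothesis kills the term. If $Z/Z' \in \mathcal{P}_1$, then \cref{prop:order}(3) handles the case $Z \in \mathcal{N}_\mydeg$ by placing $Z' \in \mathcal{N}_{\mydeg - 1}$ (term dies), while \cref{prop:order}(4) handles $Z \in \mathcal{P}_\mydeg$ by placing $Z' \in \mathcal{P}_{\mydeg - 1}$, so the term equals $1 \cdot \coeff{Z'}{P^{\mydeg - 1}} > 0$ by induction.

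Conclusion (1) follows immediately since every term vanishes. For (2), every contribution is nonnegative and the only positive ones come from the last subcase, so it remains to exhibit at least one decomposition with $Z/Z' \in \mathcal{P}_1$. Any $Z \in \mathcal{P}_\mydeg$ with $\mydeg \geq 1$ satisfies $\abs{\ea} + \eb + \ed = \mydeg \geq 1$ and $\ea = \ec$, so at least one of $\eb \geq 1$, $\ed \geq 1$, or $\ea_j \geq 1$ (for some $j$) holds; taking $Z/Z'$ to be $w$, $\bar{w}$, or $z_j \bar{z}_j$ accordingly produces a valid witness in $\mathcal{P}_1$.

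The hard part is really just bookkeeping, not any conceptual obstacle: \cref{prop:order} is engineered precisely so that its four parts line up with the four relevant subcases above, and once one recognizes the partition $\mathcal{A}_1 = \mathcal{P}_1 \sqcup \mathcal{N}_1 \sqcup (\mathcal{A}_1 \setminus \mathcal{B}_1)$ of the support of $P$, the induction closes essentially by inspection.
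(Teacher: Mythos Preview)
Your proof is correct and follows essentially the same induction-plus-convolution argument as the paper, with the same partition $\mathcal{A}_1 = \mathcal{P}_1 \sqcup \mathcal{N}_1 \sqcup (\mathcal{A}_1 \setminus \mathcal{B}_1)$ and the same appeals to \cref{prop:order}. If anything, you are slightly more explicit than the paper in verifying that at least one decomposition with $Z/Z' \in \mathcal{P}_1$ actually exists in part~(2), which the paper leaves implicit in its final inequality.
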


\begin{proof}
    We prove the result by induction on $\mydeg$.

    For $\mydeg = 0$, we get that $P^\mydeg = 1$, the set $\mathcal{N}_\mydeg$ is empty,
    and the set $\mathcal{P}_\mydeg = \set{1}$,
    so the result is trivially true.

    Suppose that for some $\mydeg > 0$, 
    $[\monomial{\ee}{\ef}{\eg}{\eh}] P^{\mydeg-1} = 0$ for every $\monomial{\ee}{\ef}{\eg}{\eh} \in \mathcal{N}_{\mydeg-1}$
    and $[\monomial{\ee}{\ef}{\eg}{\eh}] P^{\mydeg-1} > 0$ for every $\monomial{\ee}{\ef}{\eg}{\eh} \in \mathcal{P}_{\mydeg-1}$.

    Since $P^\mydeg = P \cdot P^{\mydeg-1}$, we can find the coefficient of any monomial
    in the Taylor series of $P^\mydeg$ by using the convolution formula
    \begin{equation}
    \label{eq:convolution-Pd}        
        \coeff{\monomial{\ea}{\eb}{\ec}{\ed}}{P^\mydeg}
        = \sum \coeff{\monomial{\ea-\ee}{\eb-\ef}{\ec-\eg}{\ed-\eh}}{P} \cdot
        \coeff{\monomial{\ee}{\ef}{\eg}{\eh}}{P^{\mydeg-1}},
    \end{equation}
    where the sum runs over $(\ee, \ef, \eg, \eh)$ such that
    $(\ea-\ee, \eb-\ef, \ec-\eg, \ed-\eh) \succcurlyeq (0,0,0,0)$, that is,
    when $(\ee, \ef, \eg, \eh) \preccurlyeq (\ea, \eb, \ec, \ed)$.

    \begin{enumerate}
        \item 
    Take any $Z = \monomial{\ea}{\eb}{\ec}{\ed} \in \mathcal{N}_\mydeg$.
    We will show that $\coeff{Z}{P^\mydeg} = 0$.
    Consider any term $\monomial{\ea-\ee}{\eb-\ef}{\ec-\eg}{\ed-\eh}$
    for $(\ee, \ef, \eg, \eh) \preccurlyeq (\ea, \eb, \ec, \ed)$ in 
    the convolution formula~\cref{eq:convolution-Pd}.

    If $\monomial{\ea-\ee}{\eb-\ef}{\ec-\eg}{\ed-\eh} \in \mathcal{N}_1$,
    $\coeff{\monomial{\ea-\ee}{\eb-\ef}{\ec-\eg}{\ed-\eh}}{P} = 0$ by \cref{rem:Z1-P1}.

    If $\monomial{\ea-\ee}{\eb-\ef}{\ec-\eg}{\ed-\eh}$ is in $\mathcal{P}_1$
    or $\mathcal{A}_1 \setminus (\mathcal{N}_1 \cup \mathcal{P}_1) = \mathcal{A}_1 \setminus \mathcal{B}_1$,
    $\monomial{\ee}{\ef}{\eg}{\eh} \in \mathcal{N}_{\mydeg - 1}$ by \cref{prop:order}.
    Therefore, $\coeff{\monomial{\ee}{\ef}{\eg}{\eh}}{P^{\mydeg-1}} = 0$
    by the induction hypothesis.

    Combining all cases in~\cref{eq:convolution-Pd}, we get that 
    \begin{equation}
        \coeff{Z}{P^\mydeg} 
        = \coeff{\monomial{\ea}{\eb}{\ec}{\ed}}{P^\mydeg} 
        = \sum 0 
        = 0.
    \end{equation}

        \item 
    Take any $Z = \monomial{\ea}{\eb}{\ec}{\ed} \in \mathcal{P}_\mydeg$.
    We will show that $\coeff{Z}{P^\mydeg} > 0$.
    Consider any term $\monomial{\ea-\ee}{\eb-\ef}{\ec-\eg}{\ed-\eh}$
    for $(\ee, \ef, \eg, \eh) \preccurlyeq (\ea, \eb, \ec, \ed)$ in 
    the convolution formula~\cref{eq:convolution-Pd}.

    If $\monomial{\ea-\ee}{\eb-\ef}{\ec-\eg}{\ed-\eh} \in \mathcal{N}_1$,
    $\coeff{\monomial{\ea-\ee}{\eb-\ef}{\ec-\eg}{\ed-\eh}}{P} = 0$ by \cref{rem:Z1-P1}.

    If $\monomial{\ea-\ee}{\eb-\ef}{\ec-\eg}{\ed-\eh}$ is in $\mathcal{P}_1$,
    $\coeff{\monomial{\ea-\ee}{\eb-\ef}{\ec-\eg}{\ed-\eh}}{P} = 1$ by \cref{rem:Z1-P1} and
    $\monomial{\ee}{\ef}{\eg}{\eh} \in \mathcal{P}_{\mydeg - 1}$ by \cref{prop:order}.
    Therefore, $\coeff{\monomial{\ee}{\ef}{\eg}{\eh}}{P^{\mydeg-1}} > 0$
    by the induction hypothesis.

    If $\monomial{\ea-\ee}{\eb-\ef}{\ec-\eg}{\ed-\eh}$ is in $\mathcal{A}_1 \setminus (\mathcal{N}_1 \cup \mathcal{P}_1) = \mathcal{A}_1 \setminus \mathcal{B}_1$,
    $\monomial{\ee}{\ef}{\eg}{\eh} \in \mathcal{N}_{\mydeg - 1}$ by \cref{prop:order}.
    Therefore, $\coeff{\monomial{\ee}{\ef}{\eg}{\eh}}{P^{\mydeg-1}} = 0$
    by the induction hypothesis.

    Combining all cases in~\cref{eq:convolution-Pd}, we get that
    \begin{equation}        
        \coeff{Z}{P^\mydeg} 
        = \coeff{\monomial{\ea}{\eb}{\ec}{\ed}}{P^\mydeg} 
        > 1 \cdot 0 + \sum 0 
        = 0.
    \end{equation}
    \end{enumerate}
The result then follows by induction.
\end{proof}

\subsection{Description of the Matrix \texorpdfstring{$\matcoeff{Q P^\mydeg}$}{Q Pd}}

One interesting point about the matrix $\matcoeff{Q P^\mydeg}$ is that 
it has the same zeros and nonzeros as $\matcoeff{P^\mydeg}$ from the preceding derivations,
up to a constant.

\begin{lemma}
\label[lemma]{lem:QPd}
    Let $P$ and $Q$ be as in \cref{lem:hypothesis} and $\mydeg \geq 0$.
    Then
    \begin{enumerate}
        \item 
    For every $Z \in \mathcal{N}_\mydeg$, $[Z] Q P^\mydeg = 0$.
        \item 
    For every $Z \in \mathcal{P}_\mydeg$, $[Z] Q P^\mydeg = Q(0) [Z] P^\mydeg \neq 0$.
    \end{enumerate}
\end{lemma}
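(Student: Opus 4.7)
The plan is to mimic the convolution argument from the proof of \cref{lem:Pd}, but now convolving $Q$ with $P^\mydeg$ whose Taylor coefficients have already been pinned down. The only new ingredient is that $Q(0) \neq 0$ provides a distinguished constant term. Formally, for any $Z = \monomial{\ea}{\eb}{\ec}{\ed}$ the coefficient of $Z$ in $Q P^\mydeg$ is given by the finite sum
\begin{equation}
    \coeff{Z}{Q P^\mydeg}
    = \sum_{(\ee,\ef,\eg,\eh) \preccurlyeq (\ea,\eb,\ec,\ed)}
       \coeff{\monomial{\ee}{\ef}{\eg}{\eh}}{Q} \cdot
       \coeff{\monomial{\ea-\ee}{\eb-\ef}{\ec-\eg}{\ed-\eh}}{P^\mydeg},
\end{equation}
which is finite because the multi-indices $(\ee,\ef,\eg,\eh)$ below $(\ea,\eb,\ec,\ed)$ form a finite box, so the real-analyticity (as opposed to polynomial structure) of $Q$ is not an issue.

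I would then separate off the summand with $(\ee,\ef,\eg,\eh) = (0,0,0,0)$, which contributes exactly $Q(0) \cdot \coeff{Z}{P^\mydeg}$, and argue that every other summand vanishes. Indeed, if $(\ee,\ef,\eg,\eh) \neq (0,0,0,0)$, then $\monomial{\ea-\ee}{\eb-\ef}{\ec-\eg}{\ed-\eh} \prec Z$; and since both $\mathcal{N}_\mydeg$ and $\mathcal{P}_\mydeg$ are contained in $\mathcal{B}_\mydeg$, \cref{prop:order}(1) places this quotient monomial in $\mathcal{N}_\mydeg$, whence \cref{lem:Pd}(1) makes the $P^\mydeg$-factor zero.

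With that reduction in hand, both parts drop out of \cref{lem:Pd}. For part (1), if $Z \in \mathcal{N}_\mydeg$, then even the surviving $(0,0,0,0)$ summand is zero by \cref{lem:Pd}(1), so $\coeff{Z}{Q P^\mydeg} = 0$. For part (2), if $Z \in \mathcal{P}_\mydeg$, then the surviving $(0,0,0,0)$ summand is $Q(0) \cdot \coeff{Z}{P^\mydeg}$, which is nonzero by \cref{lem:Pd}(2) and the hypothesis $Q(0) \neq 0$, giving the equality and non-vanishing asserted.

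There is no genuine obstacle in this step; the work has already been done in \cref{prop:order} and \cref{lem:Pd}. The only conceptual point worth emphasizing is that the index sets $\mathcal{N}_\mydeg \cup \mathcal{P}_\mydeg = \mathcal{B}_\mydeg$ are designed precisely so that $\mathcal{B}_\mydeg$ is ``closed downward'' in the sense that every proper sub-monomial of a monomial in $\mathcal{B}_\mydeg$ lies in $\mathcal{N}_\mydeg$, which is what causes the convolution to collapse to a single term.
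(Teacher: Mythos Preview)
Your proposal is correct and is essentially the paper's own proof: both write the convolution $\coeff{Z}{QP^\mydeg}$ as a sum over sub-monomials, isolate the single term where the $Q$-factor is the constant $Q(0)$, and use \cref{prop:order}(1) together with \cref{lem:Pd}(1) to kill every remaining summand because the corresponding $P^\mydeg$-index lands in $\mathcal{N}_\mydeg$. The only cosmetic difference is which factor carries the subtracted multi-index in the convolution, which is immaterial.
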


\begin{proof}
    Since $Q P^\mydeg = Q \cdot P^\mydeg$, we can find the coefficient of any monomial
    in the Taylor series of $Q P^\mydeg$ by using the convolution formula
    \begin{equation}
    \label{eq:convolution-QPd}
        \coeff{\monomial{\ea}{\eb}{\ec}{\ed}}{Q P^\mydeg}
        = \sum \coeff{\monomial{\ea-\ee}{\eb-\ef}{\ec-\eg}{\ed-\eh}}{Q} \cdot
        \coeff{\monomial{\ee}{\ef}{\eg}{\eh}}{P^\mydeg},
    \end{equation}
    where the sum runs over $(\ee, \ef, \eg, \eh)$ such that
    $(\ea-\ee, \eb-\ef, \ec-\eg, \ed-\eh) \succcurlyeq (0,0,0,0)$, that is,
    when $(\ee, \ef, \eg, \eh) \preccurlyeq (\ea, \eb, \ec, \ed)$.

    Take any $Z = \monomial{\ea}{\eb}{\ec}{\ed} \in \mathcal{B}_\mydeg = \mathcal{N}_\mydeg \cup \mathcal{P}_\mydeg$.
    Consider any term $\monomial{\ea-\ee}{\eb-\ef}{\ec-\eg}{\ed-\eh}$
    for $(\ee, \ef, \eg, \eh) \preccurlyeq (\ea, \eb, \ec, \ed)$ in 
    the convolution formula~\cref{eq:convolution-QPd}.

    If $\monomial{\ea-\ee}{\eb-\ef}{\ec-\eg}{\ed-\eh} = 1$,
    $\monomial{\ee}{\ef}{\eg}{\eh} = \monomial{\ea}{\eb}{\ec}{\ed}$.

    If $\monomial{\ea-\ee}{\eb-\ef}{\ec-\eg}{\ed-\eh} \neq 1$,
    $\monomial{\ee}{\ef}{\eg}{\eh} \in \mathcal{N}_\mydeg$ by \cref{prop:order}.

    \begin{enumerate}
        \item 
    If $\monomial{\ea}{\eb}{\ec}{\ed} \in \mathcal{N}_\mydeg$,
    then using \cref{lem:Pd} in \cref{eq:convolution-QPd} gives us
    \begin{equation}
        \coeff{Z}{Q P^\mydeg}
        = \coeff{\monomial{\ea}{\eb}{\ec}{\ed}}{Q P^\mydeg} 
        = Q(0) \coeff{\monomial{\ea}{\eb}{\ec}{\ed}}{P^\mydeg} + \sum 0
        = 0.
    \end{equation}

        \item 
    If $\monomial{\ea}{\eb}{\ec}{\ed} \in \mathcal{P}_\mydeg$,
    then using \cref{lem:Pd} in \cref{eq:convolution-QPd} gives us
    \begin{equation}        
        \coeff{Z}{Q P^\mydeg}
        = \coeff{\monomial{\ea}{\eb}{\ec}{\ed}}{Q P^\mydeg} 
        = Q(0) \coeff{Z}{P^\mydeg} + \sum 0
        = Q(0) \coeff{Z}{P^\mydeg}
        \neq 0,
    \end{equation}
    as $Q(0) \neq 0$.
    \end{enumerate}
\end{proof}


\section{Pivots of Matrices}
\label{sec:pivots}

In this section, we will show that the previously described 
nonzero entries of the matrices $\matcoeff{P^\mydeg}$ and $\matcoeff{Q P^\mydeg}$
act as pivots after row reduction, contributing to their ranks.
In fact, we will prove a more general result.

\begin{lemma}
\label[lemma]{lem:full-rank}
    Let $\mydeg \geq 0$, 
    $R$ be a polynomial in $\C^n$ of bidegree at most $(\mydeg, \mydeg)$,  
    $\coeff{Z}{R} = 0$ for every $Z \in \mathcal{N}_\mydeg$
    and $\coeff{Z}{R} \neq 0$ for every $Z \in \mathcal{P}_\mydeg$.
    Then the elements in the set $\mathcal{P}_\mydeg$ act as pivots
    after row reduction of the matrix $\matcoeff{R}_\mydeg$.
    Moreover, $\matcoeff{R}_\mydeg$ is of full rank, that is, 
    of rank $\binom{n + \mydeg}{\mydeg}$.
\end{lemma}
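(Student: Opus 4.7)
The plan is to reorder rows and columns of $\matcoeff{R}_\mydeg$ so that the elements of $\mathcal{P}_\mydeg$ occupy the diagonal, and then to invoke \cref{prop:order} to show that the entries within $\mathcal{B}_\mydeg$ either vanish by hypothesis (because they lie in $\mathcal{N}_\mydeg$) or coincide with the pivot itself, producing an upper triangular form (possibly after clearing row operations) whose diagonal consists of $\binom{n+\mydeg}{\mydeg}$ nonzero entries.

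First I would observe that $\mathcal{P}_\mydeg$ is in bijection with $\set{(\ea,\eb) \suchthat \abs{\ea}+\eb \leq \mydeg}$ via $(\ea,\eb) \mapsto z^\ea w^\eb \bar{z}^\ea \bar{w}^{\mydeg-\abs{\ea}-\eb}$, which sits at matrix position (row $(\ea,\mydeg-\abs{\ea}-\eb)$, column $(\ea,\eb)$); the $\binom{n+\mydeg}{\mydeg}$ such pairs match the dimension of $\matcoeff{R}_\mydeg$. I would relabel each row $(\ec,\ed)$ by the index $(\ea,\eb)=(\ec,\mydeg-\abs{\ec}-\ed)$ of its unique pivot, so rows and columns share a common index set, and fix a graded total order on the pairs---primarily by $\eb$ increasing, then by $\abs{\ea}$ increasing, with lex on $\ea$ breaking ties.

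Next I would show that in each row relabeled $(\ea,\eb)$ the entries at columns $(\ea',\eb')$ lying in the $\mathcal{B}_\mydeg$-strip of that row are forced zero except at the pivot $(\ea',\eb')=(\ea,\eb)$ itself: any other such entry corresponds to a monomial $z^{\ea'} w^{\eb'} \bar{z}^\ea \bar{w}^{\mydeg-\abs{\ea}-\eb}\in \mathcal{B}_\mydeg \setminus \mathcal{P}_\mydeg = \mathcal{N}_\mydeg$ by part~(1) of \cref{prop:order} combined with the explicit description of $\mathcal{N}_\mydeg$, and hence vanishes by hypothesis. The diagonal entry is in $\mathcal{P}_\mydeg$ and is therefore nonzero. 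Entries in columns preceding the pivot whose monomials lie \emph{outside} $\mathcal{B}_\mydeg$ may nonetheless be nonzero; these I would eliminate via row operations using earlier (already reduced) rows.

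The principal obstacle is verifying that these clearing operations preserve every diagonal pivot. Since the coefficients of $R$ at monomials outside $\mathcal{B}_\mydeg$ are not directly constrained by the hypothesis, one must argue inductively along the chosen order that the earlier rows used in reduction have zero entries at the later pivot columns within their relevant $\mathcal{B}_\mydeg$-strips, so that subtracting a scalar multiple never alters those pivots. This bookkeeping is where the combinatorial parts of \cref{prop:order} enter most heavily: the multi-index shifts produced by each row operation fit the hypotheses of parts (1)--(4), keeping the diagonal of $\matcoeff{R}_\mydeg$ intact throughout the reduction. Once the triangularization is complete, the $\binom{n+\mydeg}{\mydeg}$ nonzero diagonal entries at the $\mathcal{P}_\mydeg$ positions yield the full-rank conclusion.
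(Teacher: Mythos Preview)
Your outline captures the paper's high-level strategy—place the $\mathcal{P}_\mydeg$ elements on the diagonal and exploit the vanishing on $\mathcal{N}_\mydeg$—and you correctly isolate the crux: showing that the clearing operations for the entries lying outside $\mathcal{B}_\mydeg$ do not disturb later pivots. But the step you defer to \cref{prop:order} is not handled by that proposition. \Cref{prop:order} concerns shifts by bidegree-$(1,1)$ monomials and is used only in the inductive proofs of \cref{lem:Pd,lem:QPd}; it says nothing about the relation between two entries in the same row of $\matcoeff{R}_\mydeg$, and the paper's proof of the present lemma (via the technical \cref{lem:pivots}) does not invoke it. The paper instead introduces a carefully chosen stratification $\mathcal{P}_\mydeg = \bigsqcup_t (\mathcal{P}^t \cup \overline{\mathcal{P}^t})$ with $t = \mydeg - \lvert b - d\rvert$ and proves by strong induction on $t$ that the \emph{entire} row of each $\mathcal{P}^t$-pivot is zero off the pivot after the first $t$ steps. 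The conjugate symmetry of the index sets is essential: row-clearing for $\mathcal{P}^s$ yields column-clearing for $\mathcal{P}^s$, and dually for $\overline{\mathcal{P}^s}$; then every non-$\mathcal{B}_\mydeg$ entry in a $\mathcal{P}^t$-row is shown to lie in the column of some $\mathcal{P}^s$-pivot with $s<t$, hence already cleared.

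More concretely, your particular ordering (primarily by $b$ increasing) does not support the argument you sketch. Take $n=2$, $\mydeg=2$. In your ordering the rows are $\bar w^2,\bar z\bar w,\bar z^2,\bar w,\bar z,1$ and the columns $1,z,z^2,w,zw,w^2$. Row~4 ($\bar w$) has the unconstrained entry $[z^2\bar w]R$ strictly left of its pivot $[w\bar w]R$; clearing it uses row~3 ($\bar z^2$), whose entry at the fourth pivot column is the unconstrained $[w\bar z^2]R$. Choosing $[z^2\bar w]R=[w\bar z^2]R=[z^2\bar z^2]R=[w\bar w]R=1$ (and all $\mathcal{N}_\mydeg$ entries zero, all other $\mathcal{P}_\mydeg$ entries nonzero) satisfies the hypotheses, yet after subtracting row~3 from row~4 the fourth pivot becomes $1-1=0$. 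So with your ordering the diagonal is \emph{not} preserved by the clearing operations; the paper's $t$-ordering—starting from $w^\mydeg$ and $\bar w^\mydeg$ and working inward symmetrically—is designed precisely to make each pivot row genuinely clean before it is used.
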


\begin{figure}
    \centering
    \begin{subfigure}{0.5\linewidth}
        \centering
        \includegraphics[width=0.8\linewidth]{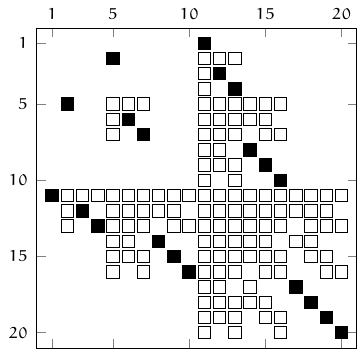}
        \caption{Before row reduction}
    \end{subfigure}%
    ~ 
    \begin{subfigure}{0.5\linewidth}
        \centering
        \includegraphics[width=0.8\textwidth]{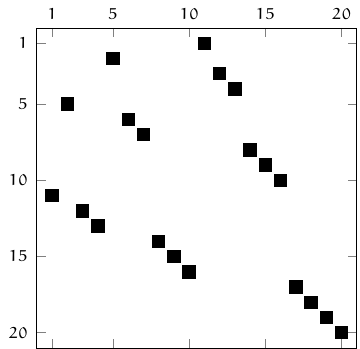}
        \caption{After row reduction}
    \end{subfigure}
    \caption{Visualization of the matrix of coefficients $\matcoeff{R}$ 
    with monomials in graded reverse lex order
    for 
    polynomial $R$ in $\C^n$ of bidegree at most $(\mydeg, \mydeg)$ 
    with $n = 3$ and $\mydeg = 3$.
    The $\blacksquare$
    represent elements in $\mathcal{P}_d$ which turn out to be the pivots;
    the empty spots
    represent elements in $\mathcal{N}_d$ which turn out to be zeros;
    The $\square$
    represent unknown elements that zero out during row reduction.
    \label{fig:pivots}
    }
\end{figure}

Before we prove this result, we need a technical result.
Since for every $Z = \monomial{\ea}{\eb}{\ec}{\ed} \in \mathcal{P}_\mydeg$,
$\abs{\ea} + \eb + \ed = \mydeg$ and $\ec = \ea$, 
we notice that 
$(\ea, \eb)$ determines $Z \in \mathcal{P}_\mydeg$ and so does $(\ec, \ed)$.
Therefore, the elements in the set $\mathcal{P}_\mydeg$ show up 
in distinct columns and distinct rows of $\matcoeff{R}$.
A visualization for $n = 3$ and $\mydeg = 3$ is shown in \cref{fig:pivots}.

Fix $\mydeg \geq 0$.
We index the elements in $\mathcal{P}_\mydeg$ in a different way:
Take any $Z = \monomial{\ea}{\eb}{\ec}{\ed} \in \mathcal{P}_\mydeg$.
For $\eb \geq \ed$, let $\ei = \mydeg - (\eb - \ed) \leq \mydeg$.
Using $\abs{\ea} + \eb + \ed = \mydeg$ and $\ec = \ea$, we get 
$\ed = \frac{\ei - \abs{\ea}}{2}$ and 
$\eb = \frac{2\mydeg - \abs{\ea} - \ei}{2}$,
so that
$Z = z^\ea w^\frac{2\mydeg - \abs{\ea} - \ei}{2} \bar{z}^\ea \bar{w}^\frac{\ei - \abs{\ea}}{2}$, and
we can index these elements using $(\ei, \ea)$.
We see that $\ei - \abs{\ea} = 2 \ed \geq 0$, so that
$0 \leq \abs{\ea} \leq \ei$.
It follows that for $\eb \geq \ed$, $\abs{\ea} \in [\ei]$ and $\ei \in [\mydeg]$.
We let
\begin{align}
    \mathcal{P}^{\ei\ea} &= \set{z^\ea w^\frac{2\mydeg - \abs{\ea} - \ei}{2} \bar{z}^\ea \bar{w}^\frac{\ei - \abs{\ea}}{2}}, \abs{\ea} \in [\ei], \ei \in [\mydeg], \text{ and} \\
    \mathcal{P}^{\ei} &= \bigcup_{\abs{\ea} \in [\ei]} \mathcal{P}^{\ei\ea}, \ei \in [\mydeg].
\end{align}

\pagebreak[3]
Notice the following equivalences.
For $\ei \in [\mydeg]$,
\begin{align}
            & \mathcal{P}^{\ei} \cap \overline{\mathcal{P}^{\ei}} \neq \emptyset \\
    \iff    & \mathcal{P}^{\ei} = \overline{\mathcal{P}^{\ei}} \\
    \iff    & \mathcal{P}^{\ei\ea} = \overline{\mathcal{P}^{\ei\ea}} \Forall \abs{\ea} \in [\ei] \\
    \iff    & \ei = \mydeg.
\end{align}
With these notations, we note the following partition
\begin{equation}
    \mathcal{P}_\mydeg = \bigsqcup_{\ei \in [\mydeg]} \mathcal{P}^{\ei} \cup \overline{\mathcal{P}^{\ei}}.
\end{equation}

Now, we are ready to state our technical result.
This shows all the steps of row reduction inductively.

\begin{lemma}
\label[lemma]{lem:pivots}
    Let $\mydeg \geq 0$, 
    $R$ be a polynomial in $\C^n$ of bidegree at most $(\mydeg, \mydeg)$,  
    $\coeff{Z}{R} = 0$ for every $Z \in \mathcal{N}_\mydeg$
    and $\coeff{Z}{R} \neq 0$ for every $Z \in \mathcal{P}_\mydeg$.
    Then for every $\ei \in [\mydeg]$,
    the elements in the set $\mathcal{P}^{\ei} \cup \overline{\mathcal{P}^{\ei}}$
    act as pivots
    after $\ei$-th step of row reduction of the matrix $\matcoeff{R}_\mydeg$.
\end{lemma}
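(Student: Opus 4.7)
The plan is to proceed by induction on $\ei$ from $1$ to $\mydeg$. At each step $\ei$, I would show that the entries at positions in $\mathcal{P}^\ei \cup \overline{\mathcal{P}^\ei}$ remain nonzero in the current state of the matrix, so they can serve as pivots. Since the elements of $\mathcal{P}_\mydeg$ occupy distinct rows and distinct columns (by the parametrization preceding the lemma statement), the pivots within a single level $\mathcal{P}^\ei \cup \overline{\mathcal{P}^\ei}$ are mutually independent and can be used in parallel.

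For the base case $\ei = 1$, no row operations have yet occurred, so the entries at positions in $\mathcal{P}^1 \cup \overline{\mathcal{P}^1}$ are the original nonzero coefficients of $R$ by hypothesis. Using \cref{rem:Z1-P1} and the explicit form of $\mathcal{P}^1$ (monomials with $|\eb - \ed| = \mydeg - 1$), one checks that the pivot rows and columns at this level have a very restricted structure: all entries in the pivot columns that lie in $\mathcal{B}_\mydeg$ (besides the pivot itself) are zero by the $\mathcal{N}_\mydeg$ hypothesis, so the row operations performed to clear these pivot columns only touch ``unknown'' entries outside $\mathcal{B}_\mydeg$, and the collateral changes land in columns that are not future pivot columns for $\mathcal{P}^{\ei'} \cup \overline{\mathcal{P}^{\ei'}}$ with $\ei' > 1$.

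For the inductive step, assume the claim holds for $1, \ldots, \ei - 1$. For each pivot $Z_0 \in \mathcal{P}^{\ei'} \cup \overline{\mathcal{P}^{\ei'}}$ used at step $\ei' < \ei$ and each future pivot position $Z_1 \in \mathcal{P}^\ei \cup \overline{\mathcal{P}^\ei}$, the row operation at $Z_0$ modifies the entry at $Z_1$'s position by an amount proportional to the product
\begin{equation*}
\coeff{z^{\ea_0} w^{\eb_0} \bar{z}^{\ea_1} \bar{w}^{\ed_1}}{R} \cdot \coeff{z^{\ea_1} w^{\eb_1} \bar{z}^{\ea_0} \bar{w}^{\ed_0}}{R}.
\end{equation*}
Using \cref{prop:order}(1), together with the defining inequalities of $\mathcal{P}^{\ei'}$ and $\mathcal{P}^\ei$ (and their conjugates), I would argue that at least one of these monomials is $\prec$ either $Z_0$ or $Z_1$, hence lies in $\mathcal{B}_\mydeg$ and is distinct from both $Z_0$ and $Z_1$, placing it in $\mathcal{N}_\mydeg$ with coefficient $0$, so the modification vanishes and the pivot at $Z_1$ is preserved.

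The main obstacle will be the case analysis in the inductive step. The direct application of \cref{prop:order}(1) handles those combinations of $(\ei', \ei)$ for which the componentwise inequalities between $(\ea_0, \eb_0, \ed_0)$ and $(\ea_1, \eb_1, \ed_1)$ align favorably, namely when one of the pairs $(\eb_0 \leq \eb_1, \ed_1 \leq \ed_0)$ or $(\eb_1 \leq \eb_0, \ed_0 \leq \ed_1)$ holds. For the remaining, ``same-direction'' cases, where both the $\eb$-components and the $\ed$-components move together, I would strengthen the induction hypothesis to track auxiliary zero entries produced by the cumulative effect of earlier row operations — in particular, the joint effect of pivots from $\mathcal{P}^{\ei'}$ and from $\overline{\mathcal{P}^{\ei'}}$ processed at the same step — and show that these auxiliary zeros force the potentially destructive modifications to cancel.
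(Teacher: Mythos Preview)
Your plan diverges from the paper's argument and, as it stands, leaves a real gap.

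The paper does \emph{not} attempt to track how earlier pivoting perturbs later pivot entries. Its induction proves a much stronger intermediate statement: for every $Z\in\mathcal{P}^{\ei}$, \emph{every} other entry in $\row(Z)$ is already zero in the matrix produced by steps $0,\dots,\ei-1$. The three cases are: $Y\in\mathcal{N}_\mydeg$ gives $[Y]R=0$ by hypothesis; $Y\in\mathcal{P}_\mydeg$ forces $Y=Z$; and---this is the point you are missing---if $Y\in\mathcal{A}_\mydeg\setminus\mathcal{B}_\mydeg$, a short computation with the index $\ej=2\mydeg-|\ea'|-2\eb'$ shows that $Y$ lies in the \emph{column} of a pivot from some $\mathcal{P}^{\ej}$ with $\ej<\ei$, so that entry was already cleared at an earlier step. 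Because each pivot row is therefore a single nonzero entry, using it as a pivot zeros its column and alters nothing else; dually for $\overline{\mathcal{P}^{\ei}}$ via columns. There is simply no collateral damage to analyze. (Note also that the paper's induction starts at $\ei=0$ with $Z=w^\mydeg$, which is essential: the lone ``unknown'' entry in the row of $z_kw^{\mydeg-1}\bar z_k\in\mathcal{P}^1$ sits precisely in the $w^\mydeg$ column and is cleared at step~$0$.)

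Your scheme instead tries to show the modification to each future pivot vanishes via a product of two coefficients $[\,\cdot\,]R$. Two problems. First, by step $\ei'$ the matrix has already been altered, so the quantities entering the row operation are the \emph{current} entries, not the original $[\,\cdot\,]R$; you have not argued these coincide at the positions you use. Second, you concede that Proposition~\ref{prop:order}(1) fails to cover the ``same-direction'' cases, and your remedy---strengthen the hypothesis to track unspecified auxiliary zeros and hope for cancellation---is not a proof. The structural fact that the $\mathcal{A}_\mydeg\setminus\mathcal{B}_\mydeg$ entries of a pivot row always fall into \emph{earlier} pivot columns is exactly what closes the induction cleanly; without it, it is unclear your case analysis can be completed.
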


\begin{proof}
Notice that $\matcoeff{R}_\mydeg$ is a finite matrix.
For every monomial $Z = \monomial{\ea}{\eb}{\ec}{\ed}$,
let $\row(Z)$ be the set of elements in the row of $\matcoeff{R}_\mydeg$ corresponding to $Z$, except the element $Z$ itself, and
$\col(Z)$ be the set of elements in the column of $\matcoeff{R}_\mydeg$ corresponding to $Z$, except the element $Z$ itself, that is,
\begin{align}
    \row(\monomial{\ea}{\eb}{\ec}{\ed}) 
    &= \set{\monomial{\ea'}{\eb'}{\ec}{\ed} \suchthat (\ea', \eb') \neq (\ea, \eb), \abs{\ea'} + \eb' \leq \mydeg}, \text{ and} \\
    \col(\monomial{\ea}{\eb}{\ec}{\ed}) 
    &= \set{\monomial{\ea}{\eb}{\ec'}{\ed'} \suchthat (\ec', \ed') \neq (\ec, \ed), \abs{\ec'} + \ed' \leq \mydeg}.
\end{align}

We claim that for every $\ei \in [\mydeg]$, it is enough to show that 
for every $Z \in \mathcal{P}^{\ei}$, $\coeff{Y}{R} = 0$ for every $Y \in \row(Z)$.
Indeed, this will imply that $\coeff{Y}{R} = 0$ for every $Y \in \col(Z)$ after row reduction,
as $\coeff{Z}{R} \neq 0$.
By similar argument, for every $Z \in \overline{\mathcal{P}^{\ei}}$,
we get $\coeff{Y}{R} = 0$ for every $Y \in \col(Z) = \overline{\row(\bar{Z})}$,
as $\bar{Z} \in \mathcal{P}^{\ei}$,
which will imply that $\coeff{Y}{R} = 0$ for every $Y \in \row(Z)$ after column reduction,
as $\coeff{Z}{R} \neq 0$.
As elements in $\mathcal{P}_\mydeg$ correspond to distinct rows and columns,
row reduction at $(t-1)$-st step will leave $\mathcal{P}^\ei$ untouched.
Therefore, elements in both $\mathcal{P}^{\ei}$ and $\overline{\mathcal{P}^{\ei}}$ 
will act as pivots after row reduction.

Thus, it is sufficient to show that
for every $\ei \in [\mydeg]$, for every $\alpha$ with $\abs{\alpha} \in [\ei]$, for every 
$Z = z^\ea w^\frac{2\mydeg - \abs{\ea} - \ei}{2} \bar{z}^\ea \bar{w}^\frac{\ei - \abs{\ea}}{2} \in \mathcal{P}^{\ei}$,
\begin{equation}
    \coeff{z^{\ea'} w^{\eb'} \bar{z}^\ea \bar{w}^\frac{\ei - \abs{\ea}}{2}}{R} = 0
\end{equation}
whenever $\abs{\ea'} + \eb' \leq \mydeg,
(\ea', \eb') \neq (\ea, \frac{2\mydeg - \abs{\ea} - \ei}{2})$, and hence
$\coeff{z^\ea w^\frac{2\mydeg - \abs{\ea} - \ei}{2} \bar{z}^{\ec'} \bar{w}^{\ed'}}{R} = 0$ whenever $\abs{\ec'} + \ed' \leq \mydeg,
(\ec', \ed') \neq (\ea, \frac{\ei - \abs{\ea}}{2})$.

We prove the result by strong induction on $\ei$.
For $\ei = 0$, we get $\ea = 0$, so $Z = w^\mydeg$.
It is enough to show that 
$\coeff{Y}{R} = 0$ for every $Y = z^{\ea'} w^{\eb'}$ 
whenever $\abs{\ea'} + \eb' \leq \mydeg, (\ea', \eb') \neq (0, \mydeg)$.
First assume that $\eb' < \mydeg$.
Then $\abs{0} + \eb' + 0 < \mydeg$, so that 
$Y = z^{\ea'} w^{\eb'} \in \mathcal{N}_\mydeg$.
So $\coeff{Y}{R} = 0$ by hypothesis.
Finally assume that $\eb' \geq \mydeg$.
As $\mydeg \geq \abs{\ea'} + \eb' \geq \eb' \geq \mydeg$, we get $\eb' = \mydeg, \ea' = 0$.
So $(\ea', \eb') = (0, \mydeg)$, a contradiction.

Suppose that for some $\ei > 0$,
for every $\ej < \ei$, for every 
$z^\ee w^\frac{2\mydeg - \abs{\ee} - \ej}{2} \bar{z}^\ee \bar{w}^\frac{\ej - \abs{\ee}}{2} \in \mathcal{P}^{\ej}$,
$\coeff{z^{\ee'} w^{\ef'} \bar{z}^\ee \bar{w}^\frac{\ej - \abs{\ee}}{2}}{R} = 0$ whenever $\abs{\ee'} + \ef' \leq \mydeg,
(\ee', \ef') \neq (\ee, \frac{2\mydeg - \abs{\ee} - \ej}{2})$, and hence
\begin{equation}
\label{eq:pivot-col}   
    \coeff{z^\ee w^\frac{2\mydeg - \abs{\ee} - \ej}{2} \bar{z}^{\eg'} \bar{w}^{\eh'}}{R} = 0
\end{equation}
whenever $\abs{\eg'} + \eh' \leq \mydeg,
(\eg', \eh') \neq (\ee, \frac{\ej - \abs{\ee}}{2})$.
Take any
$Z = z^\ea w^\frac{2\mydeg - \abs{\ea} - \ei}{2} \bar{z}^\ea \bar{w}^\frac{\ei - \abs{\ea}}{2} \in \mathcal{P}^{\ei}$.
It is enough to show that
$\coeff{Y}{R} = 0$ for every
$Y = z^{\ea'} w^{\eb'} \bar{z}^\ea \bar{w}^\frac{\ei - \abs{\ea}}{2}$
whenever $\abs{\ea'} + \eb' \leq \mydeg,
(\ea', \eb') \neq (\ea, \frac{2\mydeg - \abs{\ea} - \ei}{2})$.
Let
\begin{align}
    \ee &= \ea', \\
    \ej &= 2\mydeg - \abs{\ee} - 2\eb' 
        = 2(\mydeg - \abs{\ea'} - \eb') + \abs{\ea'}
        \geq 0, \\
    \eg' &= \ea, \text{ and} \\
    \eh' &= \frac{\ei - \abs{\ea}}{2}.
\end{align}

First, assume that 
$Y \in \mathcal{N}_\mydeg$.
So $\coeff{Y}{R} = 0$ by hypothesis.
Next, assume that
$Y \in \mathcal{P}_\mydeg$.
This means that $\abs{\ea} + \eb' + \frac{\ei - \abs{\ea}}{2} = \mydeg$ and $\ea' = \ea$.
Then
\begin{equation}
    \eb' 
    = \mydeg - \abs{\ea} - \frac{\ei - \abs{\ea}}{2}
    = \frac{2\mydeg - \abs{\ea} - \ei}{2}.
\end{equation}
So $(\ea', \eb') = (\ea, \frac{2\mydeg - \abs{\ea} - \ei}{2})$, a contradiction.
Finally, assume that
$Y \in \mathcal{A}_\mydeg \setminus (\mathcal{N}_\mydeg \cup \mathcal{P}_\mydeg)$.
Then
$\abs{\ea} + \eb' + \frac{\ei - \abs{\ea}}{2} \geq \mydeg$,
$\abs{\ea'} + \eb' + \frac{\ei - \abs{\ea}}{2} \geq \mydeg$,
($\abs{\ea} + \eb' + \frac{\ei - \abs{\ea}}{2} > \mydeg \text{ or } \abs{\ea'} + \eb' + \frac{\ei - \abs{\ea}}{2} > \mydeg$),
which implies that
$\abs{\ea} + \eb' + \frac{\ei - \abs{\ea}}{2} + \abs{\ea'} + \eb' + \frac{\ei - \abs{\ea}}{2} > \mydeg + \mydeg$,
that is,
$2\mydeg 
< \abs{\ea} + \abs{\ea'} + 2 \eb' + \ei - \abs{\ea} 
= \abs{\ea'} + 2 \eb' + \ei$.
This gives us 
\begin{equation}
    \ej 
    = 2\mydeg - \abs{\ea'} - 2\eb'
    < (\abs{\ea'} + 2 \eb' + \ei) - \abs{\ea'} - 2\eb'
    = \ei.
\end{equation}
Suppose for contradiction that $(\eg', \eh') = (\ee, \frac{\ej - \abs{\ee}}{2})$.
Then $\eg' = \ee$ and $\eh' = \frac{\ej - \abs{\ee}}{2}$,
so that
$\ea = \eg' = \ee = \ea'$ and 
$\frac{\ei - \abs{\ea}}{2} = \eh' = \frac{\ej - \abs{\ee}}{2} = \frac{\ej - \abs{\ea}}{2}$.
So $\ej = \ei$, contradicting $\ej < \ei$.
Therefore,
\begin{equation}
    Y 
    = z^{\ea'} w^{\eb'} \bar{z}^\ea \bar{w}^\frac{\ei - \abs{\ea}}{2}
    = z^\ee w^\frac{2\mydeg - \abs{\ee} - \ej}{2} \bar{z}^{\eg'} \bar{w}^{\eh'}
\end{equation}
with $\abs{\eg'} + \eh' \leq \mydeg,
(\eg', \eh') \neq (\ee, \frac{\ej - \abs{\ee}}{2})$, and $\ej < \ei$,
and so $\coeff{Y}{R} = 0$
by~\cref{eq:pivot-col}.

The result then follows by induction.
\end{proof}

With this technical result, we can prove~\cref{lem:full-rank}.

\begin{proof}[Proof of \cref{lem:full-rank}]
    By~\cref{lem:pivots}, 
    the elements in the set $\mathcal{P}_\mydeg$ act as pivots
    after row reduction of the matrix $\matcoeff{R}_\mydeg$.
    By taking projection on the set of holomorphic monomials, we see that
    \begin{equation}
    \begin{split}
        \rank \matcoeff{R}_\mydeg
        &= \# \text{pivots of $\matcoeff{R}_\mydeg$} \\
        &\geq \# \mathcal{P}_\mydeg \\
        &= \# \set{\monomial{\ea}{\eb}{\ec}{\ed} \suchthat \abs{\ea} + \eb + \ed = \mydeg, \ea = \ec} \\
        &\geq \# \set{z^{\ea} w^{\eb} \suchthat \Exists \ed \geq 0, \abs{\ea} + \eb + \ed = \mydeg, \ea = \ec} \\
        &= \# \set{z^{\ea} w^{\eb} \suchthat \abs{\ea} + \eb \leq \mydeg} \\
        &= \# \set{\text{columns of } \matcoeff{R}_\mydeg } \\
        &\geq \rank \matcoeff{R}_\mydeg.
    \end{split}
    \end{equation}
    Therefore, $\matcoeff{R}_\mydeg$ is of full rank, and
    \begin{equation}
        \rank \matcoeff{R}_\mydeg 
        = \# \set{z^{\ea} w^{\eb} \suchthat \abs{\ea} + \eb \leq \mydeg}
        = \binom{n + \mydeg}{\mydeg} .
        \qedhere
    \end{equation}
\end{proof}

We are finally ready to put everything together and prove our main result.

\begin{proof}[Proofs of~\cref{lem:hypothesis,thm:rank}]
Let $P$ and $Q$ be as in \cref{lem:hypothesis}.
\cref{lem:Pd} tells us that $P^\mydeg$ satisfies 
the hypothesis of~\cref{lem:full-rank} due to~\cref{lem:Pd}, 
which implies that 
$\matcoeff{P^\mydeg}_\mydeg = \matcoeff{P^\mydeg}$ 
is of rank $\binom{n + \mydeg}{\mydeg}$.
Similarly, \cref{lem:QPd} tells us that $Q P^\mydeg$ satisfies 
the hypothesis of~\cref{lem:full-rank} due to~\cref{lem:QPd}, which implies that
$\matcoeff{Q P^\mydeg}_\mydeg$ is of rank $\binom{n + \mydeg}{\mydeg}$.
Combining both gives us
\begin{equation}
    \rank \matcoeff{Q P^\mydeg}
    \geq \rank \matcoeff{Q P^\mydeg}_\mydeg
    = \rank \matcoeff{P^\mydeg}
    = \binom{n + \mydeg}{\mydeg}
    = \binom{\rank P + \mydeg - 1}{\mydeg}.
\end{equation}
This proves \cref{lem:hypothesis}, which completes the proof of our main result
according to \cref{lem:fullranknormform}.
\end{proof}


\bibliographystyle{amsplain}
\bibliography{main}

\end{document}